\documentclass[reqno]{amsart}

\widowpenalty=9999
\clubpenalty=9996

\usepackage{amsfonts, upgreek}
\usepackage{amssymb, amsopn, latexsym,amsmath,graphics,verbatim,amsthm,enumerate,amscd,tabularx}
\usepackage[all,cmtip]{xy}
\usepackage{graphicx}
\usepackage{fancyhdr}
\usepackage[active]{srcltx}
\usepackage[british]{babel}
\usepackage{amsmath,amssymb,amsthm,mathrsfs, url}


\theoremstyle{plain}
\newtheorem*{theorem*}{Theorem}
\newtheorem*{conjecture*}{Conjecture}

\newtheorem{thm}{Theorem}[section]
\newtheorem{theorem}[thm]{Theorem}

\newtheorem{corollary}[thm]{Corollary}

\newtheorem*{lemma*}{Lemma}

\newtheorem{proposition}[thm]{Proposition}
\theoremstyle{definition}
\newtheorem{definition}[thm]{Definition}
\theoremstyle{remark}
\newtheorem*{remark}{Remark}

\input cyracc.def \font\tencyr=wncyr10 \def\russe{\tencyr\cyracc}
\def\Sha{\text{\russe{Sh}}}

\newdir{ (}{{}*!/-5pt/@^{(}}

\SelectTips{eu}{10}
\UseTips
\entrymodifiers={+!!<0pt,\fontdimen22\textfont2>}

\DeclareMathOperator{\Gal}{Gal}
\DeclareMathOperator{\rank}{rank}
\DeclareMathOperator{\corank}{corank}

\DeclareMathOperator{\Selm}{Sel}

\DeclareMathOperator{\img}{img}

\DeclareMathOperator{\Hom}{Hom}

\DeclareMathOperator{\res}{res}

\DeclareMathOperator{\cor}{cor}
\DeclareMathOperator{\infl}{inf}
\DeclareMathOperator{\dual}{dual}
\DeclareMathOperator{\Frob}{Frob}

\newcommand{\Q}{{\mathbb{Q}}}
\newcommand{\Z}{{\mathbb{Z}}}

\newcommand{\C}{{\mathbb{C}}}

\newcommand{\Fp}{{\mathbb{F}_p}}
\newcommand{\Zp}{{\mathbb{Z}_p}}

\newcommand{\ilim}{\mathop{\varprojlim}\limits}
\newcommand{\dlim}{\mathop{\varinjlim}\limits}
\newcommand{\Sel}{{\Selm_p}}
\newcommand{\Selinf}{{\Selm_{p^{\infty}}}}

\newcommand{\cN}{\mathcal{N}}
\newcommand{\cG}{\mathcal{G}}
\newcommand{\cO}{\mathcal{O}}

\newcommand{\cy}[1]{\mathbb{Z}/#1\mathbb{Z}}

\newcommand{\overbar}[1]{\mkern 1.5mu\overline{\mkern-1.5mu#1\mkern-1.5mu}\mkern 1.5mu}

\newcommand{\joinrelshort}{\mathrel{\mkern-9mu}}
\newcommand{\shortlongrightarrow}{\relbar\joinrelshort\rightarrow}
\newcommand{\isomarrow}{\mathrel{\mathop{\setbox0\hbox{$\mathsurround0pt
        \shortlongrightarrow$}\ht0=0.7\ht0\box0}\limits
        ^{\sim\mkern2mu}}}

\sloppy

\begin{document}

\title{Selmer Groups and Anticyclotomic $\Zp$-extensions II}

\author{Ahmed Matar}

\address{Department of Mathematics\\
         University of Bahrain\\
         P.O. Box 32038\\
         Sukhair, Bahrain}
\email{amatar@uob.edu.bh}

\begin{abstract}
Let $E/\Q$ be an elliptic curve, $p$ a prime where $E$ has ordinary reduction and $K_{\infty}/K$ the anticyclotomic $\Zp$-extension of a quadratic imaginary field $K$ satisfying the Heegner hypothesis. We give sufficient conditions on $E$ and $p$ in order to ensure that $\Selinf(E/K_{\infty})$ is a cofree $\Lambda$-module of rank one. We also show that these conditions imply that $\rank(E(K_n))=p^n$ and that $\Sha(E/K_n)[p^{\infty}]=\{0\}$ for all $n \geq 0$.
\end{abstract}

\maketitle

\section{Introduction}
Let $K$ be an imaginary quadratic field with discriminant $d_K \neq -3,-4$ whose class number we will denote by $h_K$.

Let $E$ an elliptic curve of conductor $N$ defined over $\Q$ with a modular parametrization $\pi: X_0(N) \to E$ which maps the cusp $\infty$ of $X_0(N)$ to the origin of $E$ (see \cite{Wiles} and \cite{BDCT}).

Assume that every prime dividing $N$ splits in $K/\Q$. It follows that we can choose an ideal $\cN$ such that $\cO_K/\cN \cong \Z/N\Z$. Therefore the natural projection of complex tori:
$$\C/\cO_K \to \C/\cN^{-1}$$\\
is a cyclic $N$-isogeny, which corresponds to a point of $x_1 \in X_0(N)$. The theory of complex multiplication shows that $x_1$ is rational over $K_1$, the Hilbert class field of $K$. Let $y_1=\pi(x_1) \in E(K_1)$ and define the point $y_K=\text{Tr}_{K_1/K}(y_1) \in E(K)$. Kolyvagin's celebrated paper \cite{Kolyvagin} proves that when $y_K$ has infinite order, then $E(K)$ has rank 1 and the Tate-Shafarevich group $\Sha(E/K)$ is finite.

In this paper, we work with a particular prime $p$ and therefore are interested in the following weaker result of Kolyvagin (see \cite{Gross})

\begin{theorem*}[Kolyvagin]
Let $p$ be an odd prime such that $\Gal(\Q(E[p])/\Q)=GL_2(\Fp)$ and such that $p$ does not divide $y_K$ in $E(K)$, then $E(K)$ has rank 1 and $\Sha(E/K)[p^{\infty}]=\{0\}$.
\end{theorem*}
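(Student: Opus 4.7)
The plan is to apply Kolyvagin's Euler system of Heegner points on ring class fields of $K$ to build cohomology classes in $H^1(K, E[p^M])$ which bound $\Selm_p(E/K)$ from above. Fixing a large exponent $M$, I would consider squarefree products $n$ of \emph{Kolyvagin primes}: rational primes $\ell \nmid Np$, inert in $K$, satisfying $p^M \mid \ell+1$ and $p^M \mid a_\ell(E)$. The Heegner points $y_n \in E(K[n])$ over the ring class field $K[n]$ of conductor $n$ obey the Euler-system norm relation $\mathrm{Tr}_{K[n\ell]/K[n]}(y_{n\ell}) = a_\ell\, y_n$. Applying Kolyvagin's derivative operator $D_n = \prod_{\ell \mid n} D_\ell$ and descending via Kummer produces classes $\kappa(n) \in H^1(K, E[p^M])$. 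The essential local properties, due to Kolyvagin and Gross, are: $\kappa(n)$ satisfies the Selmer (unramified) condition at all primes $v \nmid n$, and at each $\ell \mid n$ the singular part of $\kappa(n)_\ell \in H^1(K_\ell, E[p^M])$ corresponds under the finite/singular comparison isomorphism $E(K_\ell)/p^M \cong E[p^M]/(\Frob_\ell - 1)$ to the finite part of $\kappa(n/\ell)_\ell$. The Kolyvagin conditions force $\Frob_\ell$ to act trivially on $E[p^M]$ and make this identification nondegenerate.

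Next I would use the hypothesis $p \nmid y_K$ to show $\kappa(1) \not\equiv 0 \pmod{p}$: up to a unit, $\kappa(1)\bmod p$ is the Kummer image of $y_K$ in $H^1(K, E[p])$, and this image is nonzero precisely when $p$ does not divide $y_K$ in $E(K)$ (noting that the big image hypothesis forces $E(K)[p] = 0$). The assumption $\Gal(\Q(E[p])/\Q) = GL_2(\Fp)$, together with its consequence that $H^1(\Gal(K(E[p])/K), E[p]) = 0$, is what allows the Chebotarev arguments to run without obstruction and ensures $E[p]$ is absolutely irreducible as a module over $G_K := \Gal(\bar K/K)$.

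The descent then proceeds as follows: given $s \in \Selm_p(E/K)$ not proportional to $\kappa(1)$, I would apply Chebotarev in the extension of $K$ cut out by $E[p]$, $s$ and $\kappa(1)$ to find a Kolyvagin prime $\ell$ whose Frobenius is chosen so that the local Tate pairing $\langle s, \kappa(\ell)\rangle_\ell \in \Fp$ is nonzero, while $\langle s, \kappa(\ell)\rangle_v = 0$ at every $v \ne \ell$ because both classes are Selmer/unramified there. Global reciprocity $\sum_v \langle s, \kappa(\ell)\rangle_v = 0$ then yields the required contradiction, forcing $s \in \Fp \cdot \kappa(1)$. Hence $\Selm_p(E/K)$ is one-dimensional, and combined with $y_K$ being of infinite order (a consequence of $p \nmid y_K$ and $E(K)[p]=0$) this forces $\rank E(K) = 1$ and $\Sha(E/K)[p] = 0$; a standard $p$-descent argument bootstraps this to $\Sha(E/K)[p^\infty] = 0$.

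The main obstacle is verifying that the Chebotarev-chosen primes actually separate classes in the way claimed: this amounts to showing that the restrictions of $s$ and $\kappa(1)$ to $G_{K(E[p^M])}$ land in linearly independent $G_K$-summands of $\Hom(G_{K(E[p^M])}, E[p^M])$, which is precisely where the $GL_2(\Fp)$ image hypothesis enters via a Goursat-style representation-theoretic analysis. The remaining technical work, tracking the divisibility by $\ell+1$ and $a_\ell$ in the derivative construction and checking $\Gal(K[n]/K)$-invariance mod $p^M$ of $D_n y_n$, is routine but delicate.
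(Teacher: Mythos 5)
You should first note that the paper does not prove this statement: it is quoted as background (``Kolyvagin's theorem'') with a pointer to Gross's exposition, so there is no in-paper proof to compare against. Your outline does follow the standard Kolyvagin--Gross strategy --- the Euler system of Heegner points over ring class fields, derived classes $\kappa(n)$, their local properties, local Tate duality plus global reciprocity, and Chebotarev --- and several individual points are handled correctly: $E(K)[p]=0$ from the big-image hypothesis, the nonvanishing of $\kappa(1)$ mod $p$ from $p\nmid y_K$, the deduction that $y_K$ has infinite order, and the passage from $\dim_{\Fp}\Sel(E/K)=1$ to $\rank E(K)=1$ and $\Sha(E/K)[p^{\infty}]=0$.

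There is, however, a genuine gap in your descent step. You propose to show that any $s\in\Sel(E/K)$ outside $\Fp\cdot\kappa(1)$ is killed by choosing a single Kolyvagin prime $\ell$ with $\langle s,\kappa(\ell)\rangle_{\ell}\neq 0$. This cannot work when $s$ lies in the same $\tau$-eigenspace as $\delta y_K$. Indeed, $\tau\,\delta y_K=\epsilon\,\delta y_K$, while the one-prime derived class satisfies $\tau c(\ell)=-\epsilon\,c(\ell)$, so its singular part $d(\ell)_{\ell}$ lies in the $(-\epsilon)$-eigenspace of $H^1(K_{\ell},E)[p]$. The local Tate pairing is $\tau$-equivariant, $\langle\tau a,\tau b\rangle_{\ell}=\langle a,b\rangle_{\ell}$, hence vanishes identically between opposite eigenspaces (as $p$ is odd). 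So for $s$ in the $\epsilon$-eigenspace the quantity $\langle s_{\ell},d(\ell)_{\ell}\rangle_{\ell}$ is zero for \emph{every} $\ell$, no Chebotarev choice can make it nonzero, and the reciprocity relation carries no information. The one-prime classes only yield $\Sel(E/K)^{-\epsilon}=0$; to prove $\Sel(E/K)^{\epsilon}=\Fp\,\delta y_K$ you must use the two-prime classes $\kappa(\ell_1\ell_2)$ (which you construct but never invoke), whose eigenvalue is $(-1)^2\epsilon=\epsilon$ and for which the reciprocity sum has two nontrivial local terms, at $\ell_1$ and $\ell_2$, to be played off against one another --- $\ell_1$ adapted to $y_K$, then $\ell_2$ chosen by Chebotarev adapted to $s$ and to $c(\ell_1)$. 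This eigenspace-by-eigenspace, one-prime-versus-two-prime structure is precisely what the present paper reproduces over $K_{\infty}$: it needs both $d_n(\ell)$ and $d_n(\ell\ell_1)$ to generate the rank-two module $\ilim H^1(K_{n,\ell},E)[p]$, and correspondingly both $\alpha_n$ and $c_n(\ell_1)$ on the Selmer side.
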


The main theorem of this article can be thought of as an extension of the above result of Kolyvagin to the tower fields of the anticylotomic $\Zp$-extension of $K$. Before stating the result let us list the hypotheses we will work under.

Let $p \geq 5$ be a prime. We shall say that $(E,p)$ satisfies $(\star)$ if the following are met:
\begin{enumerate}[(i)]
\item All the primes dividing $N$ split in $K/\Q$
\item $p$ does not divide $Nd_Kh_K \cdot \#E(\Fp) \cdot \prod_{v | N} c_v$
\item $\Gal(\Q(E[p])/\Q)=GL_2(\Fp)$
\item $E$ has ordinary reduction at $p$
\item $a_p \not \equiv -1 \; (\text{mod } p)$ if $p$ is inert in $K/\Q$
\item $a_p \not \equiv 2 \; (\text{mod } p)$ if $p$ splits in $K/\Q$
\end{enumerate}

In the above, $c_v$ is the Tamagawa number of $E$ at the prime $v$ of $K$ and the product $\prod_{v | N} c_v$ runs over all primes of $K$ dividing $N$. $a_p$ is the integer $1+p-\#E(\Fp)$.

Note that the above conditions are not identical to the conditions in \cite{Matar1} and \cite{Matar2}.

Let $K_{\infty}/K$ be the anticyclotomic $\Zp$-extension of $K$, $\Gamma=\Gal(K_{\infty}/K)$ and $K_n$ the unique subfield of $K_{\infty}$ containing $K$ such that $\Gal(K_n/K) \cong \cy{p^n}$. Denote $\Gamma_n=\Gamma^{p^n}$, $G_n=\Gamma/\Gamma_n$ and $R_n=\Fp[G_n]$.

For any $n$ and $m$ we let $\Selm_{p^m}(E/K_n)$ denote the $p^m$-Selmer group of $E$ over $K_n$ defined by
$$\displaystyle 0 \longrightarrow \Selm_{p^m}(E/K_n) \longrightarrow H^1(K_n, E[p^m])\longrightarrow \prod_v H^1(K_{n,v}, E)[p^m].$$
We also define the $p^{\infty}$-Selmer group of $E$ over $K_n$ as $\Selinf(E/K_n)=\dlim\Selm_{p^m}(E/K_n).$

Finally we define the $p^m$-Selmer group and the $p^{\infty}$-Selmer group of $E$ over $K_{\infty}$ as $\Selm_{p^m}(E/K_{\infty})=\dlim\Selm_{p^m}(E/K_n)$ and $\Selinf(E/K_{\infty})=\dlim\Selinf(E/K_n)$.

Let $\Lambda=\Zp[[\Gamma]]$ be the Iwasawa algebra attached to $K_{\infty}/K$. Fixing a topological generator $\gamma \in \Gamma$ allows us to identify $\Lambda$ with the power series ring $\Zp[[T]]$. Throughout most of the paper we work ``mod $p$'' and so we will also consider the ``mod $p$'' Iwasawa algebra $\overbar{\Lambda}=\Lambda/p\Lambda=\Fp[[T]]$.

For any discrete torsion abelian group $A$ we let $A^{\dual}=\Hom(A, \Q/\Z)$ denote in Pontryagin dual. The main result of this article is the following theorem

\begin{theorem*}
Assume that $(E,p)$ satisfies $(\star)$ and that $p$ does not divide $y_K$ in $E(K)$. Then we have
\begin{enumerate}[(i)]
\item $\Selinf(E/K_{\infty})^{\dual}$ is a free $\Lambda$-module of rank 1
\item $\rank(E(K_n))=p^n$ for all $n \geq 0$
\item $\Sha(E/K_n)[p^{\infty}]=\{0\}$ for all $n \geq 0$.
\end{enumerate}
\end{theorem*}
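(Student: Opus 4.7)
The proof proceeds in three stages: first establish part (i) by proving the analogous mod-$p$ freeness for $\Sel_p(E/K_\infty)^\dual$ and lifting via Nakayama, then deduce parts (ii) and (iii) by combining Mazur's control theorem with a Heegner point lower bound on Mordell-Weil ranks.

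For the mod-$p$ step I would show by induction on $n$ that $\Sel_p(E/K_n)^\dual$ is a free $R_n$-module of rank one. The base $n=0$ follows from the Kolyvagin theorem quoted above: under $(\star)$ with $p \nmid y_K$ the hypotheses force $E(K)[p] = \Sha(E/K)[p] = 0$ and $\rank E(K) = 1$, so $\Sel_p(E/K) \cong \Fp = R_0$. The inductive step is where the Euler-system machinery of \cite{Matar1} and \cite{Matar2} enters: Kolyvagin derivative classes built from the higher-level Heegner points $y_{p^{n+1}} \in E(K[p^{n+1}])$ furnish enough global cohomology classes to annihilate any $R_n$-generators of $\Sel_p(E/K_n)^\dual$ beyond a single cyclic copy of $R_n$, the conditions in $(\star)$ ensuring that the local terms at bad primes and at $p$ do not obstruct the argument. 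Passing to the inverse limit gives $\Sel_p(E/K_\infty)^\dual \cong \overbar{\Lambda}$. To lift, write $X = \Selinf(E/K_\infty)^\dual$; the surjectivity of $\Gal(\Q(E[p])/\Q)$ onto $GL_2(\Fp)$ together with the pro-$p$ nature of $K_\infty/K$ forces $E(K_\infty)[p] = 0$, so $\Selinf(E/K_\infty)[p] = \Sel_p(E/K_\infty)$ and therefore $X/pX \cong \Sel_p(E/K_\infty)^\dual \cong \overbar{\Lambda}$. Nakayama's lemma then produces a surjection $\Lambda \twoheadrightarrow X$ whose kernel vanishes modulo $p$ and hence is zero, establishing (i).

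Granted (i), Mazur's control theorem in the anticyclotomic setting (with error terms controlled by $(\star)$) gives $\corank_\Zp \Selinf(E/K_n) = \rank_\Zp \Lambda/\omega_n\Lambda = p^n$ where $\omega_n = \gamma^{p^n}-1$; hence $\rank E(K_n) + \corank_\Zp \Sha(E/K_n)[p^\infty] = p^n$. To separate these contributions I would use the norm-compatible Heegner system $z_n \in E(K_n) \otimes \Zp$ obtained from $y_{p^{n+1}}$ by dividing by the unit root of $X^2 - a_p X + p$, which exists and is a $p$-adic unit by ordinarity, with the congruence exclusions $a_p \not\equiv -1 \pmod p$ (inert case) and $a_p \not\equiv 2 \pmod p$ (split case) guaranteeing non-degeneracy of the norm-compatibility mod $p$. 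Combined with $p \nmid y_K$, these points generate a submodule of $E(K_n) \otimes \Zp$ of $\Zp$-rank at least $p^n$, matching the upper bound and forcing both $\rank E(K_n) = p^n$ and $\Sha(E/K_n)[p^\infty] = 0$. The main obstacle is the inductive mod-$p$ step: proving that $\Sel_p(E/K_n)^\dual$ is \emph{free} (not merely cyclic) over the local ring $R_n$ requires deploying the Kolyvagin derivatives with enough precision to rule out any proper quotient of $R_n$ appearing as a direct summand, and it is precisely to secure this precision that the tight congruence conditions in $(\star)$ are imposed.
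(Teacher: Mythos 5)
Your outline has the right ingredients in broad strokes (a Kolyvagin/Euler-system bound on the Selmer group, the control theorem, Heegner points for the rank lower bound), but the two steps that carry all the weight are asserted rather than proved, and neither is something the cited machinery delivers in the form you invoke it. The inductive claim that $\Sel(E/K_n)^{\dual}$ is a \emph{free} rank-one $R_n$-module for every $n$ is precisely the hard theorem; a finite-level Kolyvagin descent over the local ring $R_n=\Fp[G_n]$ (which has nilpotents, $G_n$ being a $p$-group in characteristic $p$) does not produce freeness with error terms uniform in $n$, and this difficulty is exactly why the paper never performs a finite-level induction. Instead it works at the infinite level: it takes as input from \cite{Matar2} that $\corank_{\overbar{\Lambda}}\Sel(E/K_{\infty})=1$ with $\mu=0$, forms the two classes $s=\delta\alpha_0$ and $s'=c_0(\ell_1)$ with opposite $\tau$-eigenvalues, chooses auxiliary primes $\ell$ with $\Frob_{\ell}$ prescribed in $\Gal(L_S/L)\cong E[p]\times E[p]$ so that $\ilim\res_{\ell}R_nd_n(\ell)$ and $\ilim\res_{\ell}R_nd_n(\ell\ell_1)$ generate the free rank-two module $\ilim H^1(K_{n,\ell},E)[p]$, and then uses global reciprocity to force $\img\uppsi_{\ell}\subseteq\img\uppsi_{\ell_1}$ for all such $\ell$; since these images generate $\Sel(E/K_{\infty})^{\dual}$, that dual equals $\img\uppsi_{\ell_1}$, which is exhibited as a quotient of a free rank-one $\overbar{\Lambda}$-module. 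None of this is captured by ``annihilate any generators beyond a single cyclic copy of $R_n$.''

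Your rank lower bound is also circular as stated: norm-compatibility of the regularized Heegner system together with $p\nmid y_K$ does \emph{not} imply that the $z_n$ span a submodule of $E(K_n)\otimes\Zp$ of rank $p^n$ --- a norm-compatible system can perfectly well lie in a small $G_n$-stable subspace. In the paper the equality $\dim_{\Fp}(R_n\alpha_n)=p^n$ is a \emph{consequence} of the Selmer structure theorem, obtained from the isomorphisms $\iota$ and $\Xi$ identifying $R_n\alpha_n$ with $\Sel(E/K_{\infty})^{\Gamma_n}$, and not an independent input. Finally, the Nakayama lifting of (i) is incomplete: knowing that $X=\Selinf(E/K_{\infty})^{\dual}$ is $\Lambda$-cyclic with $X/pX\cong\overbar{\Lambda}$ does not rule out, say, $X\cong\Lambda/p^2\Lambda$ (whose mod-$p$ reduction is also $\overbar{\Lambda}$ and whose presentation kernel lies in $p\Lambda$ without vanishing); one must separately kill the torsion, which the paper does using the vanishing of the $\mu$-invariant and the $\Lambda$-rank-one statement from \cite{Matar2}.
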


The method of proof of this theorem is a modification of the method used in \cite{Matar1}. To explain this, let $\ell$ be a rational prime that is relatively prime to $pNd_K$ and such that $\Frob_{\ell}(K(E[p])/\Q)=[\tau]$ where $\tau$ is a complex conjugation on the algebraic closure of $\Q$. Note that $\ell$ is inert in $K$ and splits completely in $K_{\infty}/K$.

For any $n$, we define $E(K_{n,\ell})/p:=\oplus_{\lambda|\ell}E(K_{\lambda,n})/p$ and
$H^1(K_{n,\ell}, E)[p]:=\oplus_{\lambda|\ell}H^1(K_{n,\lambda}, E)[p]$.

Tate local duality gives a nondegenerate pairing

$$\langle \; , \; \rangle_{\ell}: E(K_{n,\ell})/p \times H^1(K_{n,\ell}, E)[p] \to \Fp.$$\\
This identifies $H^1(K_{n, \ell}, E)[p]$ with $(E(K_{n,\ell})/p)^{\dual}$.

Taking the direct limit of the groups $H^1(K_{n, \ell}, E)[p]$ with respect to restriction and the inverse limit of the groups $E(K_{n, \ell})/p$ with respect to corestriction (norm) we get an isomorphism

$$\dlim H^1(K_{n,\ell}, E)[p] \cong (\ilim E(K_{n,\ell})/p)^{\dual}.$$\\
Now consider the $p$-Selmer group $\Sel(E/K_n)$. If $s \in \Sel(E/K_n)$, then for any prime $v$ of $K_n$ the image of $s$ in $H^1(K_{n,v}, E[p])$ (under restriction) belongs to $E(K_{n,v})/p$. Therefore, the restriction map induces a map

$$\res_{\ell}: \Sel(E/K_n) \to E(K_{n,\ell})/p.$$\\
Taking inverse limits with respect to corestriction then gives a map

$$\res_{\ell}: \ilim \Sel(E/K_n) \to \ilim E(K_{n,\ell})/p.$$\\
Dualizing this map and composing it with the above isomorphism gives a map

$$\uppsi_{\ell}: \dlim H^1(K_{n,\ell}, E)[p] \to (\ilim \Sel(E/K_n))^{\dual}.$$\\
Now assume that $E$ has ordinary reduction at $p$. In \cite{Matar1} we analyzed the images of the maps $\uppsi_{\ell}$ for an infinite set of primes $\ell$ to conclude that $\rank_{\overbar{\Lambda}}(\ilim \Sel(E/K_n)) \le 1$ (see \cite{Matar2} theorem 3.1).

Now consider the group $\ilim \Sel(E/K_{\infty})^{\Gamma_n}$ where the inverse limit is taken with respect to the norm maps. Then a direct consequence of Mazur's control theorem (under the assumptions in \cite{Matar1}) gives that the map induced by restriction

$$\Xi: \ilim \Sel(E/K_n) \to \ilim \Sel(E/K_{\infty})^{\Gamma_n}$$\\
is an injection with finite cokernel.

Also one can show (\cite{Matar2} proposition 2.1) that $\corank_{\overbar{\Lambda}}(\Sel(E/K_{\infty}))=\rank_{\overbar{\Lambda}}(\ilim \Sel(E/K_{\infty})^{\Gamma_n})$. So the results mentioned above show that $\corank_{\overbar{\Lambda}}(\Sel(E/K_{\infty})) \le 1$. But it is not hard to show that $\corank_{\overbar{\Lambda}}(\Sel(E/K_{\infty})) \geq 1$ and so we get $\corank_{\overbar{\Lambda}}(\Sel(E/K_{\infty}))=1$. By a little bit more work one can also show that $\corank_{\Lambda}(\Selinf(E/K_{\infty}))=1$. These two results are the content of theorem 3.4 of \cite{Matar2}.

In this article we reverse some of the maps above. Rather than take the direct limit of the groups $H^1(K_{n,\ell}, E)[p]$ with respect to restriction, we take their inverse limit with respect to corestriction $\ilim H^1(K_{n,\ell}, E)[p]$. Analogously, we take the direct limits of the groups $E(K_{n,\ell})/p$ with respect to restriction $\dlim E(K_{n,\ell})/p$. By local Tate duality we get an isomorphism

$$\ilim H^1(K_{n,\ell}, E)[p] \cong (\dlim E(K_{n,\ell})/p)^{\dual}.$$\\
As we explained above, we have a map

$$\res_{\ell}: \Sel(E/K_n) \to E(K_{n,\ell})/p.$$\\
Taking direct limits with respect to restriction then gives a map

$$\res_{\ell}: \dlim \Sel(E/K_n) \to \dlim E(K_{n,\ell})/p.$$\\
Recall that $\Sel(E/K_{\infty})=\dlim \Sel(E/K_n)$. Therefore by dualizing the map $\res_{\ell}$ and composing it with the above isomorphism, we obtain a map

$$\uppsi_{\ell}: \ilim H^1(K_{n,\ell}, E)[p] \to \Sel(E/K_{\infty})^{\dual}.$$\\
In a somewhat similar way to \cite{Matar1}, we study the images of the maps $\uppsi_{\ell}$ and show that as the primes $\ell$ range over an appropriate infinite set the images of $\uppsi_{\ell}$ generate $\Sel(E/K_{\infty})^{\dual}$. Under the hypothesis of our theorem above this will allow us to prove that $\Sel(E/K_{\infty})^{\dual}$ is a free $\overbar{\Lambda}$-module of rank 1. Together with a few observations we will obtain the results of the theorem. However we should note that our proof relies on the fact mentioned above that $\corank_{\overbar{\Lambda}}(\Sel(E/K_{\infty}))=1$. Therefore this article is a natural continuation of \cite{Matar1}.

\section{Preliminaries}

\subsection{Notation}

In this section we explain the notation we will use throughout the paper. We will use any notation and definitions listed in the introduction and in addition introduce more in this section.

We fix a complex conjugation $\tau$ on $\overbar{\Q}$ (the algebraic closure of $\Q$). Given a $\Z[\frac{1}{2}][\tau]$-module $M$, we have a decomposition $M=M^+ \oplus M^-$ where $M^+$ and $M^-$ denotes the submodule on which $\tau$ acts as $+1$, respectively $-1$. Also, if $x \in M$ and $X \subset M$, we let

$$x^{\pm}=\frac{1}{2}(x\pm \tau x)$$
$$X^{\pm}=\{x^{\pm} \; | \; x \in X\}$$\\
For any $m$ we let $K[m]$ denote the ring class field of $K$ of conductor $m$. Let $K[p^{\infty}]=\cup_{n \geq 1} K[p^n]$. Then  $\Gal(K[p^{\infty}]/K)$ is isomorphic to $\Zp \times \Delta$, where $\Delta$ is a finite abelian group. The unique $\Zp$-extension that is contained in $K[p^{\infty}]/K$ is the anticyclotomic $\Zp$-extension of $K$ which we will denote by $K_{\infty}/K$.

If $\ell$ is a rational prime and $F$ is a number field we define

\begin{flalign*}
\qquad \qquad &E(F_{\ell})/p:=\oplus_{\lambda|\ell}E(F_{\lambda})/p&\\[0.5em]
&H^1(F_{\ell}, E[p]):=\oplus_{\lambda|\ell}H^1(F_{\lambda}, E[p])&\\[0.5em]
&H^1(F_{\ell}, E)[p]:=\oplus_{\lambda|\ell}H^1(F_{\lambda}, E)[p],
\end{flalign*}\\
\noindent where the sum is taken over all primes of $F$ dividing $\ell$.

With this notation we let $\res_{\ell}$ be the localization map:

\begin{flalign*}
\qquad \qquad &\res_{\ell}: E(F)/p \to E(F_{\ell})/p&\\[0.5em]
&\res_{\ell}: H^1(F, E[p]) \to H^1(F_{\ell}, E[p])&\\[0.5em]
&\res_{\ell}: H^1(F, E)[p] \to H^1(F_{\ell}, E)[p].
\end{flalign*}\\
If $F=K_n$, with the above notation we let $K_{n,\ell}$ denote $F_{\ell}$.

We will frequently write $\dlim$ (resp. $\ilim$) for $\dlim_n$ (resp. $\ilim_n$) as our limits are taken over $n$.

\subsection{Heegner points and Kolyvagin classes}
We fix a modular parametrization $\pi: X_0(N) \to E$ which maps the cusp $\infty$ of $X_0(N)$ to the origin of $E$ (see \cite{Wiles} and \cite{BDCT})
If we assume that every prime dividing $N$ splits in $K/\Q$ (condition $(\star)$-i), then it follows that we can choose an ideal $\cN$ such that $\cO_K/\cN \cong \Z/N\Z$. Let $m$ be an integer that is relatively prime to $Nd_K$ and let $\cO_m = \Z + m\cO_K$ be the order of conductor $m$ in $K$. The ideal $\cN_m=\cN \cap \cO_m$ satisfies $\cO_m/\cN_m \cong \Z/N\Z$ and therefore the natural projection of complex tori:

$$\C/\cO_m \to \C/\cN_m^{-1}$$\\
is a cyclic $N$-isogeny, which corresponds to a point of $X_0(N)$. Let $\alpha[m]$ be its image under the modular parametrization $\pi$. From the theory of complex multiplication we have that $\alpha[m] \in E(K[m])$ where $K[m]$ is the ring class field of $K$ of conductor $m$.

If we assume that the class number of $K$ is not divisible by $p$ (condition $(\star)$-ii), it follows for any $n$ that $K[p^{n+1}]$ is the ring class field of minimal conductor that that contains $K_n$. We now define $\alpha_n \in E(K_n)$ to be the trace from $K[p^{n+1}]$ to $K_n$ of $\alpha[p^{n+1}]$.

Let $R_n\alpha_n$ denote the $R_n$-submodule of $H^1(K_n, E[p])$ generated by the image of $\alpha_n$ under the Kummer map

$$E(K_n) \to H^1(K_n, E[p]).$$\\

If we assume that $\Gal(\Q(E[p])/\Q)=GL_2(\Fp)$ (condition $(\star)$-iii), then by corollary 2.4 of \cite{Matar1} we have $E(K_{\infty})[p^{\infty}]=\{0\}$. This implies that the restriction map for $m\geq n$

$$H^1(K_n, E[p]) \to H^1(K_m, E[p])$$\\
\noindent is injective and therefore allows us to view $R_n\alpha_n$ as a submodule of $H^1(K_m, E[p])$.

Now assume $E$ has ordinary reduction at $p$ (condition $(\star)$-iv), $p \nmid \#E(\Fp)$ (condition $(\star)$-i) and conditions $(\star)$-v and $(\star)$-vi hold when $p$ is inert in $K$, respectively $p$ splits in $K$. From section 3.3 of \cite{PR} it follows that

\begin{align}
&\left\{
\begin{aligned}
\text{Tr}_{K_1/K}(\alpha_1) &= (a_p-a_p^{-1}(p+1))\alpha_0 \quad \text{if $p$ is inert in $K/\Q$}\\
\text{Tr}_{K_1/K}(\alpha_1) &= (a_p-(a_p-2)^{-1}(p-1))\alpha_0 \quad \text{if $p$ splits in $K/\Q$}\\
\end{aligned}\label{HP1}
\right.\\[10pt]
&\text{Tr}_{K_{n+1}/K_n}(\alpha_{n+1})=a_p\alpha_n-\alpha_{n-1} \quad \text{for $n \geq 1$} \label{HP2}
\end{align}\\

We claim that the map $\text{Tr}_{K_{n+1}/K_n}: R_{n+1} \alpha_{n+1} \to R_n \alpha_n$ is surjective. We argue by induction on $n$. The assumptions above imply that the numbers $a_p-a_p^{-1}(p+1)$ and $a_p-(a_p-2)^{-1}(p-1)$ are $p$-adic units when $p$ is inert in $K$, respectively $p$ splits in $K$. This proves the claim for $n=0$. Assume the claim for $n-1$. Then $\alpha_{n-1} = u \text{Tr}_{K_n/K_{n-1}}(\alpha_n)$ for some unit $u$ in $R_{n-1}$ Then from (\ref{HP2}) above we get that $\text{Tr}_{K_{n+1}/K_n}(\alpha_{n+1})=(a_p-u\text{Tr}_{K_n/K_{n-1}})\alpha_n$. Since $a_p \not\equiv 0 \mod p$, therefore $a_p-u\text{Tr}_{K_n/K_{n-1}}$ is a unit in $R_n$. The claim for $n$ follows.

As in \cite{Matar1}, we now describe the construction of Kolyvagin classes over ring class fields. For the rest of this section we assume that $(E,p)$ satisfies $(\star)$. First let us make the following definition

\begin{definition} A rational prime $\ell$ is called a \textit{Kolyvagin prime} if\\
(i) $\ell$ is relatively prime to $pNd_K$\\
(ii) $\Frob_{\ell}(K(E[p])/\Q)=[\tau]$
\end{definition}

Let $r$ be a squarefree product of Kolyvagin primes. For any $n$ let $K_n[r]$ denote the field $K_nK[r]$. We now define $\alpha_n(r)$ to be the trace of $\alpha[rp^{n+1}]$ from $K[rp^{n+1}]$ to $K_n[r]$.

Let $G_{n,r}=\Gal(K_n[r]/K_n[1])$ and let $G_{n,\ell}=\Gal(K_n[\ell]/K_n[1])$. By class field theory $G_{n,r}=\prod_{\ell|r}G_{n,\ell}$ and $G_{n,\ell}$ is cyclic of order $\ell+1$. Let $\sigma_{\ell}$ be a generator of $G_{n,\ell}$. Define $D_{\ell} :=\sum_{i=1}^{\ell} i\sigma_{\ell}^i \in \cy{p}[G_{n,\ell}]$ and $D_r:=\prod_{\ell|r}D_{\ell} \in \cy{p}[G_{n,r}]$. Then one can show that $D_r\alpha_n(r)$ belongs to $(E(K_n[r]/p))^{G_{n,r}}$ (see \cite{BD} lemma 3.3). It follows that $\text{Tr}_{K_n[1]/K_n} D_r \alpha_n(r) \in (E(K_n[r])/p)^{\cG_n,r}$ where $\cG_{n,r}=\Gal(K_n[r]/K_n)$. Now consider the commutative diagram

\begin{equation}\label{kolyvagin_classes_diagram}
\xymatrix{
&&& 0 \ar[d]\\
&&& H^1(K_n[r]/K_n, E)[p] \ar[d]_{\infl}\\
0 \ar[r] & E(K_n)/p \ar[d] \ar[r]^-{\phi} & H^1(K_n, E[p]) \ar[d]^{\rotatebox{90}{$\text{\~{}}$}}_{\res} \ar[r] & H^1(K_n, E)[p] \ar[d]_{\res} \ar[r] & 0\\
0 \ar[r] & (E(K_n[r])/p)^{\cG_{n,r}} \ar[r]^-{\phi_r} & H^1(K_n[r], E[p])^{\cG_{n,r}} \ar[r] & H^1(K_n[r],E)[p]^{\cG_{n,r}}
}
\end{equation}

Let $c_n(r) \in H^1(K_n, E[p])$ be so that

$$\phi_r(\text{Tr}_{K_n[1]/K_n} D_r\alpha_n(r))=\text{Res}(c_n(r))$$\\
and let $d_n(r)$ be the image of $c_n(r)$ in $H^1(K_n, E)[p]$. Note that $c_n(1) = \phi(\alpha_n)$.\\
These Kolyvagin classes have the following properties:
\begin{enumerate}
\item Let $-\epsilon$ denote the sign of the functional equation of the L-function of $E/\Q$, $f_r$ be the number of prime divisors of $r$. We have $\tau\alpha_n=\epsilon g^{i_{n,1}}\alpha_n + \beta_n$ with $\beta_n \in E(K_n)_{\text{tors}}$, $g$ a generator of $\Gal(K_{\infty}/K)$ and $i_{n,1} \in \{0,...,p^n-1\}$. Moreover, $\tau$ acts on $H^1(K_n, E[p])$ and we can deduce that $\tau c_n(r)=\epsilon_r g^{i_{n,r}}c_n(r)$ where $\epsilon_r=(-1)^{f_r}\epsilon$ and $i_{n,r}\in \{0,...,p^n-1\}$.
\item If $v$ is a rational prime that does not divide $r$, then $d_n(r)_{v_n}=0$ in $H^1(K_{v_n}, E)[p]$ for all primes of $K_n \, \, v_n|v$.
\item If $\ell|r$, there exists a $G_n$-equivariant and a $\tau$-antiequivariant isomorphism:
$$\psi_{n, \ell}: H^1(K_{n,\ell}, E)[p] \to E(K_{n,\ell})/p$$
such that $\psi_{n,\ell}(\res_{\ell}d_n(r))=\res_{\ell}(c_n(r/\ell))$.\\
If we let $\cor_{n+1}$ denote the corestriction maps $H^1(K_{n+1,\ell}, E)[p] \to H^1(K_{n, \ell}, E)[p]$ and $E(K_{n+1,\ell})/p \to E(K_{n, \ell})/p$, then we have \\$\psi_{n, \ell} \circ \cor_{n+1} = \cor_{n+1} \circ \psi_{n+1, \ell}$.
\item The following corestriction maps are surjective: \\$\cor_{n+1}: R_{n+1}\alpha_{n+1} \to R_n\alpha_n$, $\cor_{n+1}: R_{n+1}c_{n+1}(r) \to R_nc_n(r)$ and $\cor_{n+1}: R_{n+1}d_{n+1}(r) \to R_nd_n(r)$.\\
\end{enumerate}
The isomorphism $\psi_{n, \ell}$ is constructed in \cite{BD}. The fact that it commutes with corestriction is easily seen since any Kolyvagin prime $\ell$ splits completely in $K_{\infty}/K$.\\

Regarding the surjectivity of the corestriction maps in (4), the surjectivity  of $\cor_{n+1}: R_{n+1}\alpha_{n+1} \to R_n\alpha_n$ was proven earlier. By a similar proof using the  relations in \cite{PR} section 3.3, one can show that $\cor_{n+1}: R_{n+1}\alpha_{n+1}(r) \to R_n\alpha_n(r)$ is surjective (note that $\Gal(K_n[r]/K)=\Gal(K_n/K) \times \Gal(K[r]/K)$ and so $R_n \alpha_n(r)$ makes sense). It follows from the commutative diagram (\ref{kolyvagin_classes_diagram}) that $\cor_{n+1}: R_{n+1}c_{n+1}(r) \to R_nc_n(r)$ is surjective which in turn implies that $\cor_{n+1}: R_{n+1}d_{n+1}(r) \to R_nd_n(r)$ is also surjective.

\subsection{Preliminary Results}

In this section, we collect some preliminary results that will be used in the proof of our theorem in the introduction. We assume throughout this section that $\Gal(\Q(E[p])/\Q)=GL_2(\Fp)$ (condition $(\star)$-iii).

Now for any $n$ and any rational prime $\ell$, local Tate duality gives a non-degenerate pairing (see \cite{Gross} prop. 7.5)

\begin{equation}
\langle \; , \; \rangle_{\ell}: E(K_{n,\ell})/p \times H^1(K_{n,\ell}, E)[p] \to \Fp
\end{equation}\\
This identifies $H^1(K_{n, \ell}, E)[p]$ with $(E(K_{n,\ell})/p)^{\dual}$.

Moreover, if $a \in E(K_{n+1,\ell})/p$ and $b \in H^1(K_{n,\ell}, E)[p]$, then a property of Tate local duality gives $\langle\res a, b\rangle=\langle a, \cor b\rangle$ where

$$\cor: H^1(K_{n+1, \ell}, E)[p] \to H^1(K_{n, \ell}, E)[p]$$\\
is the corestriction map.

$$\res : E(K_{n, \ell})/p \to E(K_{n+1, \ell})/p$$\\
is the restriction map. Therefore Tate local duality induces an isomorphism

\begin{equation}\label{Tatelocalduality_isomorphism}
\ilim H^1(K_{n,\ell}, E)[p] \cong (\dlim E(K_{n,\ell})/p)^{\dual}
\end{equation}\\
where the inverse limit is taken over $n$ with respect to the corestriction maps and the direct limit is taken over $n$ with respect to the restriction maps.

The $p$-Selmer group $\Sel(E/K_n)$ consists of the cohomology classes $s \in H^1(K_n, E[p])$ whose restrictions $\res_v(s)\in H^1(K_{n,v}, E[p])$ belong to $E(K_{n,v})/p$ for all primes $v$ of $K_n$, where we view $E(K_{n,v})/p$ as a subspace of $H^1(K_{n,v}, E[p])$ using the Kummer sequence

$$\displaystyle 0 \longrightarrow E(K_{n,v})/p \longrightarrow H^1(K_{n,v}, E[p]) \longrightarrow H^1(K_{n,v}, E)[p] \longrightarrow 0.$$\\
Therefore, we have a map

\begin{equation}
\res_{\ell}: \Sel(E/K_n) \to E(K_{n,\ell})/p.
\end{equation}\\
The map $\res_{\ell}$ then induces a map

\begin{equation}
\res_{\ell}: \dlim \Sel(E/K_n) \to \ilim E(K_{n,\ell})/p.
\end{equation}\\
Note that $\Sel(E/K_{\infty})=\dlim \Sel(E/K_n)$. Therefore, dualizing the map $\res_{\ell}$ and using the isomorphism (\ref{Tatelocalduality_isomorphism}) above we get a map

$$\uppsi_{\ell}: \ilim H^1(K_{n,\ell}, E)[p] \to \Sel(E/K_{\infty})^{\dual}.$$\\
We will need to understand the $\overbar{\Lambda}$-structure of $\img \uppsi_{\ell}$. In order to do this, we will need the following important observation

\begin{proposition}\label{Iwasawa_rank_proposition}
If $\ell$ is a Kolyvagin prime, then $\ilim H^1(K_{n,\ell}, E)[p]$ is a free $\overbar{\Lambda}$-module of rank 2.
\end{proposition}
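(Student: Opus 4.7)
The plan is to compute $H^1(K_{n,\ell},E)[p]$ explicitly as an $R_n$-module and then pass to the inverse limit.

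First I would observe that since $\ell$ is a Kolyvagin prime, the condition $\Frob_{\ell}(K(E[p])/\Q)=[\tau]$ forces $\ell$ to be inert in $K/\Q$ (because $\tau$ acts non-trivially on the imaginary quadratic field $K$), and it implies that $\Frob_{\ell}$ acts as an element of order $2$ on $E[p]$. Letting $\lambda$ denote the unique prime of $K$ above $\ell$, the Frobenius $\Frob_{\lambda}=\Frob_{\ell}^{2}$ thus acts trivially on $E[p]$, so $E[p]\subseteq E(K_{\lambda})$. Since $E$ has good reduction at $\lambda$ (because $\ell\nmid N$) and $p\neq\ell$, the standard filtration on $E(K_{\lambda})$ by the formal group and its reduction yields $|E(K_{\lambda})/p|=|E(K_{\lambda})[p]|=p^{2}$, and Tate local duality shows that $V:=H^{1}(K_{\lambda},E)[p]$ is a $2$-dimensional $\Fp$-vector space.

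Next I would analyze the $R_n$-module structure of $H_{n}:=H^{1}(K_{n,\ell},E)[p]$. Because $\ell$ is a Kolyvagin prime it splits completely in $K_{\infty}/K$, so for each $n$ there are exactly $p^{n}$ primes of $K_{n}$ above $\ell$, each with completion equal to $K_{\lambda}$, and $G_{n}$ acts simply transitively on these primes. Choosing a compatible system of base primes $\lambda_{n}^{(0)}$ at each level (obtained by restricting a single prime $\lambda_{\infty}$ of $K_{\infty}$ above $\ell$), one obtains a $G_{n}$-equivariant identification
$$H_{n}\;\cong\;V\otimes_{\Fp}R_{n},$$
where $R_{n}$ acts on the second factor via the regular representation. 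In particular $H_{n}$ is free of rank $2$ over $R_{n}$.

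The main step is to identify the corestriction $\cor:H_{n+1}\to H_{n}$ under the above identifications with the map induced by $\mathrm{id}_{V}$ on the first factor and the natural projection $R_{n+1}\twoheadrightarrow R_{n}$ on the second. Indeed, for each prime $\lambda_{n}$ of $K_{n}$ above $\ell$ there are exactly $p$ primes $\lambda_{n+1}$ of $K_{n+1}$ above it, and for each such prime the local extension $K_{n+1,\lambda_{n+1}}/K_{n,\lambda_{n}}$ is trivial, so the local corestriction is the identity $V\to V$. The global corestriction into the $\lambda_{n}$-summand is therefore the sum of these $p$ identities, which matches precisely the map induced by the ring projection.

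Finally, since $V$ is a finite-dimensional $\Fp$-vector space, the inverse limit commutes with the tensor product in the second factor, giving
$$\ilim_{n}H_{n}\;\cong\;V\otimes_{\Fp}\ilim_{n}R_{n}\;=\;V\otimes_{\Fp}\overbar{\Lambda},$$
which is a free $\overbar{\Lambda}$-module of rank $\dim_{\Fp}V=2$. The principal subtlety is the identification of corestriction with the ring projection in the third step, which requires a careful compatible choice of base primes $\lambda_{n}^{(0)}$; once this is in place, the argument reduces to the elementary fact that local corestriction is the identity when the local extension is trivial.
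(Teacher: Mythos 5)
Your argument is correct, but it takes a genuinely different route from the paper's. The paper's proof is a short reduction to earlier work: it considers the \emph{direct} limit $M=\dlim H^1(K_{n,\ell},E)[p]$ with respect to restriction, quotes \cite{Matar1} proposition 2.5 for the facts that $M$ is a cofree $\overbar{\Lambda}$-module of corank $2$ with $M^{\Gamma_n}=H^1(K_{n,\ell},E)[p]$, and then invokes the general structural result of \cite{Matar2} proposition 2.1 (relating the $\overbar{\Lambda}$-corank of a cofinitely generated module to the rank and freeness of the inverse limit of its $\Gamma_n$-invariants) to conclude. You instead compute everything from scratch: you identify each level $H^1(K_{n,\ell},E)[p]$ with the induced module $V\otimes_{\Fp}R_n$, where $V=H^1(K_{\lambda},E)[p]$ is two-dimensional because $\Frob_{\lambda}=\Frob_{\ell}^{2}$ acts trivially on $E[p]$ and local Tate duality applies, and you identify corestriction with the natural projection $R_{n+1}\twoheadrightarrow R_n$, so that the inverse limit becomes $V\otimes_{\Fp}\overbar{\Lambda}$. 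The key inputs you use --- that $\ell$ is inert in $K$ and splits completely in $K_{\infty}/K$, so that $G_n$ permutes the primes above $\ell$ simply transitively and the local corestrictions are identities --- are exactly the facts underlying \cite{Matar1} proposition 2.5, so the two arguments are close in spirit; yours has the advantage of being self-contained and of making the transition maps completely explicit (at the cost of the bookkeeping of a compatible choice of primes), while the paper's is shorter because the cofreeness of the direct limit and the passage from coranks of direct limits to ranks of inverse limits were already established in the companion papers.
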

\begin{proof}
Consider the $\overbar{\Lambda}$-module $M:=\dlim H^1(K_{n, \ell}, E)[p]$ where the direct limit is taken with respect to the restriction maps. In \cite{Matar1} proposition 2.5 we showed that $M$ is a cofree $\overbar{\Lambda}$-module of rank 2 and that $M^{\Gamma_n}=H^1(K_{n, \ell}, E)[p]$ for any $n$. Therefore the proposition follows from \cite{Matar2} proposition 2.1.
\end{proof}

Now for any $n$ let $L_n=K_n(E[p])$ and $\cG_n=\Gal(L_n/K_n)$ which is isomorphic to $GL_2(\Fp)$ by \cite{Matar1} lemma 2.3. Then we have the following proposition (\cite{Gross} prop. 9.1)

\begin{proposition}\label{res_isom_prop}
The restriction map induces an isomorphism:
$$\res: H^1(K_n, E[p]) \isomarrow H^1(L_n, E[p])^{\cG_n} =\Hom_{\cG_n}(\Gal(\overbar{\Q}/L_n), E[p])$$
\end{proposition}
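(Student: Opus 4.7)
The plan is to apply the Hochschild--Serre inflation--restriction sequence to the tower $\overbar{\Q}/L_n/K_n$ with coefficients in $E[p]$. Since $L_n = K_n(E[p])$, the absolute Galois group $G_{L_n}$ acts trivially on $E[p]$, so $E[p]^{G_{L_n}} = E[p]$, and the five-term exact sequence reads
$$0 \longrightarrow H^1(\cG_n, E[p]) \xrightarrow{\infl} H^1(K_n, E[p]) \xrightarrow{\res} H^1(L_n, E[p])^{\cG_n} \longrightarrow H^2(\cG_n, E[p]).$$
Hence the statement is reduced to verifying $H^1(\cG_n, E[p]) = 0$ and $H^2(\cG_n, E[p]) = 0$.

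The main step is to establish this vanishing by exploiting the center $Z \cong \Fp^{\times}$ of $\cG_n \cong GL_2(\Fp)$, consisting of scalar matrices. Since $p \geq 5$, the order $|Z| = p-1$ is coprime to $p$, so multiplication by $|Z|$ is invertible on the $\Fp$-vector space $E[p]$; this kills $H^q(Z, E[p])$ for all $q \geq 1$. Moreover, $Z$ acts on $E[p]$ by scalar multiplication, so for any nontrivial $c \in \Fp^{\times}$ the endomorphism $c - 1$ is invertible on $E[p]$, giving $E[p]^Z = 0$. The Hochschild--Serre spectral sequence
$$E_2^{a,b} = H^a(\cG_n/Z, H^b(Z, E[p])) \Longrightarrow H^{a+b}(\cG_n, E[p])$$
therefore has every $E_2$-term equal to zero, so $H^i(\cG_n, E[p]) = 0$ for all $i \geq 0$.

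Feeding this into the inflation--restriction sequence yields that $\res$ is an isomorphism onto $H^1(L_n, E[p])^{\cG_n}$. Finally, because $G_{L_n}$ acts trivially on $E[p]$, one has $H^1(L_n, E[p]) = \Hom(\Gal(\overbar{\Q}/L_n), E[p])$, and taking $\cG_n$-invariants on the left corresponds under this identification to taking $\cG_n$-equivariant homomorphisms, producing the claimed description as $\Hom_{\cG_n}(\Gal(\overbar{\Q}/L_n), E[p])$.

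The only substantive obstacle is the cohomological vanishing step; once the center $Z$ is used to reduce the calculation via Hochschild--Serre, everything else is formal from $L_n = K_n(E[p])$. I would expect a short, self-contained paragraph to suffice.
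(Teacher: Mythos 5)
Your argument is correct and is essentially the proof of the result the paper cites here (\cite{Gross}, Prop.\ 9.1): the paper gives no proof of its own beyond that citation, and Gross's proof is exactly your inflation--restriction argument combined with the vanishing of $H^i(\cG_n, E[p])$ obtained from the prime-to-$p$ central subgroup of scalars acting without nonzero invariants. Nothing further is needed.
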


From the above proposition we get a pairing

\begin{equation}
[\; , \;]: H^1(K_n, E[p]) \times \Gal(\overbar{\Q}/L_n) \to E[p]
\end{equation}\\
If $S_n \subset H^1(K_n, E[p])$ is a finite subgroup, let $\Gal_{S_n}(\overbar{\Q}/L_n)$ be the subgroup consisting of $\rho \in \Gal(\overbar{\Q}/L_n)$ such that $[s,\rho]=0$ for all $s \in S_n$ and let $L_{S_n}$ be the fixed field of $\Gal_{S_n}(\overbar{\Q}/L_n)$. Then $L_{S_n}/K_n$ is a finite Galois extension and the above pairing induces a nondegenerate pairing

\begin{equation}\label{pairing}
[\; , \;]: S_n \times \Gal(L_{S_n}/L_n) \to E[p]
\end{equation}\\

We now assume that we have a finite subgroup $S \subset H^1(K, E[p])$ that is stable under $\Gal(K/\Q)$. Then $L_S/\Q$ is a finite Galois extension. Let $V=\Gal(L_S/L_0)$. Given a subset $U$ of $V$ we define

$$\mathscr{L}(U)=\{\ell \; \text{rational prime}\; | \;\ell \nmid pN\; \text{and} \; \Frob_{\ell}(L_S/\Q)=[\tau u] \; \text{for} \; u \in U \}$$

Note that every $\ell \in \mathscr{L}(U)$ is a Kolyvagin prime. A suitably chosen subgroup $S$ will play an important role in our proof of the theorem in the introduction

\begin{proposition}\label{generating_Selmer_proposition}
If $U^+$ generates $V^+$, then $\img \uppsi_{\ell}$ with $\ell$ ranging over $\mathscr{L}(U)$ generate $\Sel(E/K_{\infty})^{\dual}$
\end{proposition}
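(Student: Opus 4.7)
The plan is to work through Pontryagin duality. Let $N \subseteq \Sel(E/K_\infty)^{\dual}$ be the closed $\overbar{\Lambda}$-submodule generated by the images of the $\uppsi_\ell$ as $\ell$ runs through $\mathscr{L}(U)$. Unwinding the construction of $\uppsi_\ell$ via local Tate duality shows that the image of $\uppsi_\ell$ is exactly the annihilator (inside $\Sel(E/K_\infty)^{\dual}$) of $\ker(\res_\ell)$, so $N = \Sel(E/K_\infty)^{\dual}$ is equivalent to the triviality of the intersection $\bigcap_{\ell \in \mathscr{L}(U)} \ker\!\big(\res_\ell : \Sel(E/K_\infty) \to \dlim_n E(K_{n,\ell})/p\big)$. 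It therefore suffices to show that any $s \in \Sel(E/K_\infty)$ whose localisation vanishes at every $\ell \in \mathscr{L}(U)$ must itself be zero.

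First I would descend to a fixed finite level. Since $E(K_\infty)[p^\infty] = \{0\}$ (from $(\star)$-(iii), as noted in section 2.2), each transition $\Sel(E/K_n) \hookrightarrow \Sel(E/K_m)$ is injective, so $s$ lifts to an $s_n \in \Sel(E/K_n)$. Because every Kolyvagin prime splits completely in $K_\infty/K$, the restriction maps on $E(K_{n,\ell})/p$ are diagonal injections, and hence $\res_\ell(s_n) = 0$ already in $E(K_{n,\ell})/p$ itself. By Proposition \ref{res_isom_prop}, $s_n$ corresponds to a $\cG_n$-equivariant homomorphism $f_{s_n} : \Gal(\overbar{\Q}/L_n) \to E[p]$; for a Kolyvagin prime $\ell$ (where $p \mid \ell + 1$ and $E(K_\ell)/p \cong \Fp^2$), the vanishing of $\res_\lambda(s_n)$ at each prime $\lambda \mid \ell$ of $K_n$ translates into the vanishing of $f_{s_n}$ on a Frobenius element above $\lambda$ in $L_n$.

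The heart of the argument is a Chebotarev density computation inside the composite $L_n L_S / \Q$. A prime $\ell \in \mathscr{L}(U)$ has $\Frob_\ell(L_S/\Q) = [\tau u]$ with $u \in U$, and therefore its Frobenius in the abelian group $V = \Gal(L_S/L_0)$ is $(\tau u)^2 = u + \tau(u) = 2 u^+$. Decomposing $s_n = s_n^+ + s_n^-$ under the $\tau$-action (valid since $p$ is odd) and invoking the $\tau$-(anti)equivariance built into the local Kolyvagin isomorphism $\psi_{n,\ell}$ (property (3) of the Kolyvagin classes in section 2.2), the vanishing of $\res_\ell(s_n)$ for all $\ell \in \mathscr{L}(U)$ translates into $f_{s_n^\pm}$ vanishing on every Frobenius class arising from $U^+$. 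The hypothesis that $U^+$ generates $V^+$ then forces $f_{s_n^\pm}$ to vanish on all of $V^+$; combined with the choice of $S$ (which is made so that the piece of $f_{s_n}$ outside $V^+$ gives no new information), this yields $f_{s_n} \equiv 0$ and hence $s_n = 0$ by Proposition \ref{res_isom_prop}.

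The hard part is precisely this parity bookkeeping: the primes in $\mathscr{L}(U)$ only realise Frobenius elements inside $V^+$, not throughout $V$, so the argument relies on $S$ being "suitably chosen" in the sense advertised just before the statement, namely so that the relevant cocycles $f_{s_n}$ factor through a quotient visible only on the $+$ side. I expect the subsequent application to the main theorem to pick $S$ from the appropriate $\tau$-eigenspace of $H^1(K, E[p])$, built from the $\Gamma$-invariant image of $\Sel(E/K_\infty)$, which is exactly what aligns the hypothesis on $V^+$ with the bi-eigenspace structure of $s_n$ and allows the Chebotarev step to close.
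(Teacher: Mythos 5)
Your reduction to a fixed finite level --- dualizing so that the statement becomes ``$\res_{\ell}(s)=0$ for all $\ell\in\mathscr{L}(U)$ implies $s=0$'', then using $E(K_{\infty})[p^{\infty}]=\{0\}$ and the complete splitting of Kolyvagin primes in $K_{\infty}/K$ to descend to a class $s_n\in\Sel(E/K_n)$ --- is exactly what the paper does; at that point the paper simply cites \cite{Matar1}, Proposition 2.8. The gap is in the Chebotarev core you then supply. The homomorphism $f_{s_n}$ attached to $s_n$ by Proposition \ref{res_isom_prop} lives on $\Gal(\overbar{\Q}/L_n)$ and cuts out a field $L_{\{s_n\}}$ that a priori has nothing to do with $L_S$, while the condition $\ell\in\mathscr{L}(U)$ only pins down $\Frob_{\ell}$ on $L_S$. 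So the assertion that vanishing of $\res_{\ell}(s_n)$ for all $\ell\in\mathscr{L}(U)$ makes ``$f_{s_n^{\pm}}$ vanish on all of $V^+$'' and that this ``yields $f_{s_n}\equiv 0$'' tacitly assumes $f_{s_n}$ factors through $\Gal(L_SL_n/L_n)$, which fails for a general Selmer class. The parenthetical carrying the whole weight --- that $S$ is ``chosen so that the piece of $f_{s_n}$ outside $V^+$ gives no new information'' --- is not a property $S$ has or could have: in the paper $S$ is simply the two-dimensional span of $\delta\alpha_0$ and $c_0(\ell_1)$, not something built from the $\Gamma$-invariants of $\Sel(E/K_{\infty})$ as you speculate at the end.

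The argument you need splits each eigencomponent $s_n^{\pm}$ according to whether the extension of $L_n$ it cuts out sits inside $L_SL_n$ or is linearly disjoint from it over $L_n$; the dichotomy, and the resulting product decomposition of $\Gal(L_{\{s_n\}}L_S L_n/L_n)$, rests on $\cG_n\cong GL_2(\Fp)$ and the irreducibility of $E[p]$ (as in \cite{Gross}, prop.\ 9.3). For the disjoint component, Chebotarev lets you keep $\Frob_{\ell}(L_S/\Q)=[\tau u]$ while choosing the component $\rho$ of Frobenius in $\Gal(L_{\{s_n\}}/L_n)$ arbitrarily; then $f_{s_n^{\pm}}\bigl((\tau\rho)^2\bigr)=(1\pm\tau)f_{s_n^{\pm}}(\rho)$ with $f_{s_n^{\pm}}(\rho)$ ranging over a $\cG_n$-submodule of $E[p]$, which forces that component to vanish --- and note that here only $U\neq\emptyset$ is used, not the generation hypothesis. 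The hypothesis that $U^+$ generates $V^+$ is needed precisely for the complementary component, where the Frobenius really is determined by $u$ and one computes $[s_n,(\tau u)^2]=2[s_n,u^+]$. Your sketch collapses these two cases into one and, as written, establishes neither.
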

\begin{proof}
Let $s \in \dlim \Sel(E/K_n)=\Sel(E/K_{\infty})$. To prove the proposition, it suffices to show that $\res_{\ell}(s)=0$ for all $\ell \in \mathscr{L}(U)$ implies $s=0$. By \cite{Matar1} corollary 2.4, we have that $E(K_{\infty})[p^{\infty}]=\{0\}$. From this it follows that the maps in the direct limit $\dlim \Sel(E/K_n)$ are injections. Also for any $\ell \in \mathscr{L}(U)$, since $\ell$ is inert in $K/\Q$ and $\ell \neq p$, we get that $\ell$ splits completely in $K_{\infty}/K$ so the maps in the direct limit $\dlim E(K_{n,\ell})/p$ are also injections. These observations imply that to prove the proposition, we need to show that for any $n$ if $s \in \Sel(E/K_n)$ and $\res_{\ell}(s)=0$ for all $\ell \in \mathscr{L}(U)$, then $s=0$. This can be shown exactly as in \cite{Matar1} proposition 2.8.
\end{proof}

The following proposition will be an important tool to finding relations in $\Sel(E/K_{\infty})^{\dual}$

\begin{proposition}\label{global_duality_proposition}

For any $n$, if $s \in \Sel(E/K_n)$ and $\gamma \in H^1(K_n, E)[p]$, then

$$\sum_{\ell} \langle \res_{\ell} s, \; \res_{\ell} \gamma \rangle_{\ell} =0$$

where the sum is taken over all the rational primes

\end{proposition}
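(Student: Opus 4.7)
The plan is to derive this from the reciprocity law of global class field theory, realising the local Tate pairings as cup products in Galois cohomology.

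First, I would fix the cohomological framework. The Weil pairing $E[p] \otimes E[p] \to \mu_p$ combined with cup product yields a pairing
\[
H^1(K_n, E[p]) \times H^1(K_n, E[p]) \longrightarrow H^2(K_n, \mu_p) = \mathrm{Br}(K_n)[p],
\]
and analogously for each completion $K_{n,v}$. The Kummer sequence identifies $E(K_{n,v})/p$ with a subgroup of $H^1(K_{n,v}, E[p])$ and presents $H^1(K_{n,v}, E)[p]$ as a quotient. It is a standard fact (Tate local duality for elliptic curves) that $E(K_{n,v})/p$ is its own annihilator under the above local cup product, and so the induced pairing
\[
E(K_{n,v})/p \times H^1(K_{n,v}, E)[p] \longrightarrow \mathbb{F}_p,
\]
obtained by choosing any lift $\widetilde{\gamma_v} \in H^1(K_{n,v}, E[p])$ of $\gamma_v$ and composing with the invariant map $\mathrm{inv}_v$, coincides with the pairing $\langle \,,\,\rangle_v$ of the excerpt and is independent of the choice of lift.

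Next, given $s \in \Sel(E/K_n)$ and $\gamma \in H^1(K_n, E)[p]$, I would lift $\gamma$ to some global class $\widetilde{\gamma} \in H^1(K_n, E[p])$ via the surjection in the Kummer sequence and form the global cup product
\[
s \cup \widetilde{\gamma} \in H^2(K_n, \mu_p) = \mathrm{Br}(K_n)[p].
\]
Both $s$ and $\widetilde{\gamma}$ are unramified away from a finite set of places of $K_n$, so $\mathrm{inv}_v(\res_v(s \cup \widetilde{\gamma}))$ vanishes for all but finitely many $v$. The reciprocity sequence of global class field theory then gives
\[
\sum_v \mathrm{inv}_v\bigl(\res_v(s \cup \widetilde{\gamma})\bigr) = 0,
\]
the sum running over all places of $K_n$.

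Finally, $\mathrm{inv}_v(\res_v s \cup \res_v \widetilde{\gamma}) = \langle \res_v s, \res_v \gamma \rangle_v$ by the first step (note that $\res_v s$ indeed lies in $E(K_{n,v})/p$ because $s$ is a Selmer class, and the archimedean places contribute trivially since $p$ is odd), and grouping places of $K_n$ over each rational prime $\ell$ using the direct-sum conventions $E(K_{n,\ell})/p = \oplus_{\lambda \mid \ell} E(K_{n,\lambda})/p$ and likewise for $H^1(K_{n,\ell}, E)[p]$ converts the sum over $v$ into the sum over rational primes $\ell$ appearing in the statement. The main subtlety is purely bookkeeping: verifying that the cup-product formalism gives back exactly the pairing $\langle \,,\,\rangle_\ell$ used in the paper, together with the independence of the global lift $\widetilde{\gamma}$; once these identifications are in hand, the result reduces to the reciprocity law.
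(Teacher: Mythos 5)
Your proof is correct and follows essentially the same route as the paper, which simply cites the global reciprocity law for the Brauer group of $K_n$ together with the cup-product definition of local Tate duality; your write-up just makes explicit the lifting of $\gamma$ through the Kummer sequence, the self-annihilation of $E(K_{n,v})/p$, and the grouping of places over rational primes.
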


The proposition is an immediate consequence of the global reciprocity law for elements in the Brauer group of $K_n$ (\cite{NSW} th. 8.1.17), taking into account the definition of local Tate duality (\emph{loc. cit.} th. 7.2.6).

\section{Proof of Theorem}
In this section we prove the theorem in the introduction. We assume throughout this section the assumptions of the theorem, namely that $(E, p)$ satisfies $(\star)$ and that $p$ does not divide $y_K$ in $E(K)$. Recall that $y_K= \text{Tr}_{K_1/K}(\alpha[1])$ (in the introduction $\alpha[1]$ was also denoted $x_1$) and $\alpha_0= \text{Tr}_{K[p]/K}(\alpha[p])$. From the relations in \cite{PR} section 3.3, we see that $\alpha_0= a y_K$ where $a=a_p$ and $a=a_p-2$ when $p$ is inert in $K$, respectively $p$ splits in $K$. Our conditions (conditions $(\star)$-iv and $(\star)$-vi) imply that $a$ is not divisible by $p$. So since $p$ does not divide $y_K$ in $E(K)$, it also follows that $p$ does not divide $\alpha_0$ in $E(K)$. Let $\delta \alpha_0$ be the nonzero image of $\alpha_0$ in $H^1(K, E[p])$.

Consider the restriction maps $\res_n: H^1(K_n, E[p]) \to H^1(K_{n+1}, E[p])$. These maps are injective as was explained in section 2.2. Since the corestriction maps $\cor_{n+1}: R_{n+1} \alpha_{n+1} \to R_n \alpha_n$ are surjective by property (4) of the Kolyvagin classes in section 2.2, therefore it follows that $\res_n(R_n \alpha_n) \subseteq R_{n+1} \alpha_{n+1}$. So, as in \cite{Matar1}, we may for the direct limit $\dlim R_n \alpha_n$. Then we have the following theorem

\begin{theorem}\label{direct_limit_theorem}
As a $\overbar{\Lambda}$-module $(\dlim R_n \alpha_n)^{\dual}$ is finitely generated and not torsion.
\end{theorem}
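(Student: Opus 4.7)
The plan is to show that $A_\infty := \dlim R_n \alpha_n$ has one-dimensional $\Gamma$-invariants (forcing $A_\infty^\dual$ to be finitely generated over $\overbar{\Lambda}$ by Nakayama) and is itself infinite (forcing $A_\infty^\dual$ to be non-torsion, since a finitely generated torsion $\overbar{\Lambda}$-module is automatically finite).

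First I would pin down the $R_n$-module structure of each $A_n := R_n \alpha_n$. Because $R_n \cong \Fp[T]/(T^{p^n})$ (via $T \leftrightarrow \gamma_n - 1$) is a local Artinian principal ideal ring, every cyclic $R_n$-module has the form $\Fp[T]/(T^{d_n})$. Thus $d_n := \dim_{\Fp} A_n$ completely determines $A_n$, and $A_n^{G_n} = A_n[T]$ is one-dimensional over $\Fp$. I would then establish the lower bound $d_n \geq 2^n$. The key input is that $\cor_{n+1}\circ\res_n = [K_{n+1}:K_n] = p \equiv 0 \pmod p$ on $A_n$, so the injection $\res_n : A_n \hookrightarrow A_{n+1}$ (injectivity of restriction following from $E(K_\infty)[p]=0$, which holds under $(\star)$-iii) lands inside $\ker(\cor_{n+1})$. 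Property (4) of the Kolyvagin classes gives $\cor_{n+1}$ surjective, so $\dim_{\Fp}\ker(\cor_{n+1}) = d_{n+1} - d_n$ and the inclusion forces $d_{n+1} \geq 2d_n$. The base case $d_0 \geq 1$ holds because $\alpha_0 = a y_K$ with $a$ a $p$-adic unit (conditions $(\star)$-iv,v,vi) and $p \nmid y_K$ by hypothesis, so the Kummer class $\delta\alpha_0$ is nonzero.

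To conclude, Pontryagin duality exchanges invariants with coinvariants: $A_\infty^\dual/T A_\infty^\dual = (A_\infty^\dual)_\Gamma \cong (A_\infty^\Gamma)^\dual$. Taking $\Gamma$-invariants commutes with a direct limit of $\Gamma$-equivariant injections, so $A_\infty^\Gamma = \dlim A_n^{G_n}$, a direct limit of injections of one-dimensional $\Fp$-spaces, hence itself one-dimensional. Topological Nakayama then guarantees that $A_\infty^\dual$ is generated by a single element over $\overbar{\Lambda}$, in particular finitely generated. Since $d_n \to \infty$ the module $A_\infty$ is infinite, and no finitely generated torsion module over the discrete valuation ring $\overbar{\Lambda}$ can be infinite, so $A_\infty^\dual$ is not torsion.

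The main obstacle is the dimension-doubling inequality $d_{n+1} \geq 2 d_n$; everything else is essentially formal module theory over the Frobenius algebra $R_n$ together with the standard topological Nakayama argument. I would expect this line of reasoning to in fact yield the stronger statement that $A_\infty^\dual$ is free of rank one over $\overbar{\Lambda}$ (a cyclic module that is not torsion must be the ring itself), but only the weaker conclusion is needed for the stated theorem.
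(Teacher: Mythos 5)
Your proof is correct, and it takes a genuinely different route from the paper's on the first half of the statement. For finite generation, the paper simply observes that $(\dlim R_n\alpha_n)^{\dual}$ is a quotient of $\Sel(E/K_\infty)^{\dual}$, which is finitely generated over $\overbar{\Lambda}$ because $\Selinf(E/K_\infty)^{\dual}$ is finitely generated over $\Lambda$ (Manin) and $\Sel(E/K_\infty)\cong\Selinf(E/K_\infty)[p]$; you instead exploit the cyclic $R_n$-module structure of $R_n\alpha_n$ over the local Artinian PIR $R_n\cong\Fp[T]/(T^{p^n})$ to see that the $\Gamma$-invariants of the limit are one-dimensional, and then apply topological Nakayama. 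Your argument is more self-contained (it avoids the global input from Iwasawa theory of Selmer groups) and yields the stronger conclusion that the dual is cyclic, in fact free of rank one once infinitude is known --- a statement the paper only reaches later, and for the inverse limit $\ilim R_n\alpha_n$ rather than the direct limit, in Theorem \ref{Heegner_module_rank_theorem}. For the non-torsion half the two arguments converge: the paper reduces to showing $\dlim R_n\alpha_n$ is infinite and asserts that this ``clearly'' follows from $\delta\alpha_0\neq 0$ together with the surjectivity of the corestrictions; your doubling inequality $d_{n+1}\geq 2d_n$, obtained from $\cor_{n+1}\circ\res_n=p=0$ on $p$-torsion coefficients combined with injectivity of restriction and surjectivity of corestriction, is precisely the mechanism that justifies that assertion, so here you have supplied details the paper leaves implicit. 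The only point you pass over quickly is that $\res_n(R_n\alpha_n)\subseteq R_{n+1}\alpha_{n+1}$ (needed both to form the direct limit and to view $\res_n(A_n)$ inside $\ker(\cor_{n+1}|_{A_{n+1}})$); this is established in the paper just before the theorem, again from the surjectivity of corestriction, so it is available to you and does not constitute a gap.
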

\begin{proof}
It is well-known (see for example \cite{Manin} th. 4.5) that $\Selinf(E/K_{\infty})^{\dual}$ is a finitely generated $\Lambda$-module. Since $E(K_{\infty})[p^{\infty}]=\{0\}$ by \cite{Matar1} corollary 2.4, therefore we have an isomorphism $\Sel(E/K_{\infty}) \isomarrow \Selinf(E/K_{\infty})[p]$ and so $\Sel(E/K_{\infty})^{\dual}$ is a finitely generated $\overbar{\Lambda}$-module. The same then holds for $(\dlim R_n \alpha_n)^{\dual}$ (since it is a quotient of $\Sel(E/K_{\infty})^{\dual}$).

We now prove that $(\dlim R_n \alpha_n)^{\dual}$ is not $\overbar{\Lambda}$-torsion. Since finitely generated torsion $\overbar{\Lambda}$-modules are finite, we just have to show that $\dlim R_n \alpha_n$ has infinite cardinality. Clearly, this will hold if we show that the inverse limit $\ilim R_n \alpha_n$ defined using surjective corestriction maps (see property (4) of the Kolyvagin classes in section 2.2) is nonzero. This last fact holds since $\delta \alpha_0 \neq 0$.
\end{proof}

\begin{remark}
In both \cite{Matar1} and \cite{Matar2}, the condition that $p$ does not divide $\varphi(Nd_K)$ nor the number of geometrically connected components of the kernel of $\pi^*: J_0(N) \to E$ appears. Let us call this condition (C). When $E$ has ordinary reduction at $p$ (which is the case we are considering in this paper), condition (C) is used via the work of Cornut \cite{Cornut} to prove \cite{Matar1} theorem 3.1 which has an identical statement to the above theorem. As we just saw, the above theorem holds without the need for condition (C). For $\delta \alpha_0 \neq 0$ ensures that the above theorem is true. Therefore, we see that theorem A in \cite{Matar1} and theorem 3.4(a) in \cite{Matar2} (whose proof is based on theorem A of \cite{Matar1}) are true if $(E,p)$ satisfies ($\star$) and $\delta \alpha_0 \neq 0$.
\end{remark}

We now define $X_{s,p}(E/K_{\infty}):=\ilim \Sel(E/K_n)$ where the inverse limit is taken over $n$ with respect to the corestriction maps. Note that we have chosen to put an ``s" in the subscript so that the reader does not confuse this group with the group $X_p(E/K_{\infty})$ in \cite{Matar1} which was defined in a different way.

We also define $Y_{s,p}(E/K_{\infty})=\ilim \Sel(E/K_{\infty})^{\Gamma_n}$ where the inverse limit is taken over $n$ with respect to the norm maps.

We also consider the group $\ilim R_n \alpha_n$ where the inverse limit is taken with respect to surjective corestriction maps (see property (4) of the Kolyvagin classes in section 2.2).

For each $n$ we have an injection $\iota_n: R_n \alpha_n \hookrightarrow \Sel(E/K_n)$. These injections induce an injection

$$\iota: \ilim R_n \alpha_n \hookrightarrow X_{s,p}(E/K_{\infty}).$$\\
Also, for each $n$ we have a restriction map $\res_n: \Sel(E/K_n) \to \Sel(E/K_{\infty})^{\Gamma_n}$. These restriction maps induce a map

$$\Xi: X_{s,p}(E/K_{\infty}) \to Y_{s,p}(E/K_{\infty}).$$\\
We now have the following important theorem

\begin{theorem}\label{Heegner_module_rank_theorem}
The group $Y_{s,p}(E/K_{\infty})$ is a free $\overbar{\Lambda}$-module of rank 1 and the maps $\iota$ and $\Xi$ are isomorphisms (hence $X_{s,p}(E/K_{\infty})$ and $\ilim R_n \alpha_n$ are also free $\overbar{\Lambda}$-modules of rank 1).
\end{theorem}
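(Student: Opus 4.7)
The plan is to establish the three claims in sequence by combining: (a) an identification of $\ilim R_n\alpha_n$ with $\overbar{\Lambda}$ using the DVR structure of $\overbar{\Lambda}=\Fp[[T]]$ together with Theorem \ref{direct_limit_theorem}; (b) the corank-one result $\corank_{\overbar{\Lambda}}(\Sel(E/K_{\infty}))=1$ from Theorem~3.4 of \cite{Matar2}, transported to an $\ilim$-statement via Proposition~2.1 of \cite{Matar2}; and (c) a Mazur-style control argument for $\Xi$. Chaining these together with Propositions \ref{generating_Selmer_proposition} and \ref{global_duality_proposition} to force surjectivity of $\iota$ then yields the theorem.

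First I would show $\ilim R_n\alpha_n\cong\overbar{\Lambda}$. Each $R_n\alpha_n$ is cyclic as an $R_n$-module (generated by $\alpha_n$), and the corestrictions are surjective by property~(4) of Section~2.2, so the inverse limit is cyclic over $\overbar{\Lambda}$, say $\overbar{\Lambda}/I$. Theorem \ref{direct_limit_theorem} says $(\dlim R_n\alpha_n)^{\dual}$ is non-torsion, hence $\dlim R_n\alpha_n$ is infinite. Since the restrictions are injective, $|R_n\alpha_n|$ is non-decreasing and unbounded, and since $\ilim R_n\alpha_n$ surjects onto each $R_n\alpha_n$, the inverse limit is infinite. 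Over the DVR $\overbar{\Lambda}$, any proper cyclic quotient is finite, so $I=0$ and $\ilim R_n\alpha_n\cong\overbar{\Lambda}$.

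Next I would handle $Y_{s,p}(E/K_{\infty})$ and $\Xi$. Theorem~3.4 of \cite{Matar2} gives $\corank_{\overbar{\Lambda}}(\Sel(E/K_{\infty}))=1$, and Proposition~2.1 of \cite{Matar2} then yields $\rank_{\overbar{\Lambda}}(Y_{s,p}(E/K_{\infty}))=1$. For $\Xi$, the vanishing $E(K_{\infty})[p]=0$ (Corollary~2.4 of \cite{Matar1}) together with inflation--restriction makes each $\res_n\colon\Sel(E/K_n)\to\Sel(E/K_{\infty})^{\Gamma_n}$ injective, and a Mazur-style control computation bounds each cokernel by uniformly finite local contributions; passing to the inverse limit makes $\Xi$ injective with finite cokernel.

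Combining the previous two steps produces a chain of injections of rank-one $\overbar{\Lambda}$-modules
\[
\ilim R_n\alpha_n\cong\overbar{\Lambda}\;\hookrightarrow\;X_{s,p}(E/K_{\infty})\;\hookrightarrow\;Y_{s,p}(E/K_{\infty}),
\]
so all three are torsion-free of rank one and hence, since $\overbar{\Lambda}$ is a PID, free of rank one once we show the two quotients are trivial. This is the main obstacle, and I would attack it via the Kolyvagin system input: given any class $s$ in $\dlim\Sel(E/K_n)$ not lying in $\dlim R_n\alpha_n$, use Proposition \ref{generating_Selmer_proposition} to produce a prime $\ell\in\mathscr{L}(U)$ with $\res_{\ell}s\ne 0$, pair it against the derivative class $d_n(\ell)$ via Proposition \ref{global_duality_proposition}, and use property~(3) of Section~2.2 to rewrite the resulting local pairing in terms of $c_n(1)=\phi(\alpha_n)$; dualizing and using the freeness of $\ilim R_n\alpha_n$ then forces $s\in\dlim R_n\alpha_n$, proving surjectivity of $\iota$ and, by the chain above, of $\Xi$. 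I expect this Kolyvagin-derivative step, carried out $\overbar{\Lambda}$-equivariantly along the anticyclotomic tower, to be the real work.
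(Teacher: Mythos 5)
Your first two steps are sound: the direct verification that $\ilim R_n\alpha_n$ is a cyclic $\overbar{\Lambda}$-module with infinite underlying set, hence all of $\overbar{\Lambda}$ since proper cyclic quotients of the DVR $\Fp[[T]]$ are finite, is a legitimate (and slightly different) way to get freeness of $\ilim R_n\alpha_n$ up front, whereas the paper only deduces it at the very end. But the final step, which you yourself identify as ``the real work,'' has a genuine gap, and in fact aims at the wrong target. First, there is a $\dlim$/$\ilim$ confusion: $\iota$ is a map of \emph{inverse} limits $\ilim R_n\alpha_n\to\ilim\Sel(E/K_n)$, while Propositions \ref{generating_Selmer_proposition} and \ref{global_duality_proposition} and the derivative classes $d_n(\ell)$ live naturally on the \emph{direct} limit $\Sel(E/K_{\infty})=\dlim\Sel(E/K_n)$; showing ``every $s\in\dlim\Sel(E/K_n)$ lies in $\dlim R_n\alpha_n$'' is a different (and much stronger) statement than surjectivity of $\iota$, and your sketch does not explain how to transport the pairing argument to the inverse limit. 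Second, even granting surjectivity of $\iota$, the chain $\ilim R_n\alpha_n\hookrightarrow X_{s,p}\hookrightarrow Y_{s,p}$ does not yield surjectivity of $\Xi$: knowing $A\twoheadrightarrow B$ says nothing about $B\to C$ being onto unless you separately prove the composite is onto, which you do not.

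The paper avoids all of this. Under $(\star)$ the control theorem gives not merely finite cokernels but honest isomorphisms $\res_n\colon\Selinf(E/K_n)\isomarrow\Selinf(E/K_{\infty})^{\Gamma_n}$ (the error terms are killed by $p\nmid\#E(\Fp)\cdot\prod_v c_v$ and $E(K_{\infty})[p^{\infty}]=0$), so $\Xi$ is an isomorphism outright; combined with \cite{Matar2} Theorem 3.4 and Proposition 2.1 this makes $Y_{s,p}$ and hence $X_{s,p}$ free of rank $1$. Surjectivity of $\iota$ then needs no Kolyvagin derivatives at all: since $X_{s,p}\cong\overbar{\Lambda}$ is free of rank $1$ over a DVR with maximal ideal $(\gamma-1)$, it suffices to exhibit an element of $\img\iota$ outside $(\gamma-1)X_{s,p}$, and the element $a\in\ilim R_n\alpha_n$ with $\pi_0(a)=\delta\alpha_0\neq 0$ does this because $\pi_0$ kills $(\gamma-1)X_{s,p}$. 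I would recommend replacing your last paragraph with this soft argument; the derivative-class machinery is only needed later, for Theorem \ref{Selmer_structure_theorem}.
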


\begin{proof}
First note that by corollary 2.4 in \cite{Matar1}, we have that $E(K_{\infty})[p^{\infty}]=\{0\}$. Then since $E$ has good ordinary reduction at $p$, $p \nmid \#E(\Fp) \cdot \prod_{v | N} c_v$ and $E(K_{\infty})[p^{\infty}]=\{0\}$, therefore by Mazur's control theorem (see \cite{Mazur} and \cite{Gb_IIC}) the maps $\res_n: \Selinf(E/K_n) \to \Selinf(E/K_{\infty})^{\Gamma_n}$ are isomorphisms for all $n$.

Since $E(K_{\infty})[p^{\infty}]=\{0\}$, therefore we have an isomorphism $\Sel(E/K_{\infty}) \isomarrow \Selinf(E/K_{\infty})[p]$. So Mazur's control theorem implies that the restriction maps $\res_n: \Sel(E/K_n) \to \Sel(E/K_{\infty})^{\Gamma_n}$ are isomorphisms. This implies that the map $\Xi$ is an isomorphism.

By \cite{Matar2} theorem 3.4 (and the remark after theorem \ref{direct_limit_theorem}), $\Selinf(E/K_{\infty})^{\dual}$ has $\Lambda$-rank equal to 1 and $\mu$-invariant equal to zero. Also as above we have $\Sel(E/K_{\infty}) \isomarrow \Selinf(E/K_{\infty})[p]$.  These 2 facts imply that $\Sel(E/K_{\infty})^{\dual}$ has $\overbar{\Lambda}$-rank equal to 1. This then implies by \cite{Matar2} proposition 2.1 that $Y_{s,p}(E/K_{\infty})$ is a free $\overbar{\Lambda}$-module of rank 1. Since $\Xi$ is an isomorphism, therefore $X_{s,p}(E/K_{\infty})$ is also a free $\overbar{\Lambda}$-module of rank 1.

Since $\iota$ is an injection and $X_{s,p}(E/K_{\infty})$ is a free $\overbar{\Lambda}$-module of rank 1, therefore to prove that $\iota$ is an isomorphism we only have to show that $\img \iota \notin (\gamma-1)X_{s,p}$. We show this as follows. First, for the inverse limits $\ilim R_n \alpha_n$ and $X_{s,p}(E/K_{\infty})$, let $\pi_0$ be the projection onto its zeroth component:

$$\pi_0: \ilim R_n \alpha_n \to \Fp \alpha_0$$
$$\pi_0: X_{s,p}(E/K_{\infty}) \to \Sel(E/K)$$\\
Since maps defining the inverse limit $\ilim R_n \alpha_n$ are surjective, therefore there exists $a \in \ilim R_n \alpha_n$ such that $\pi_0(a)= \delta \alpha_0$. Now consider any element $b \in (\gamma-1)X_{s,p}(E/K_{\infty})$. Since $\Sel(E/K)$ is invariant under $\Gamma$, therefore it follows that $\pi_0(b)=0$. But we have $\pi_0(\iota(a))=\iota_0(\delta \alpha_0)$ which is not zero since $\iota_0$ is an injection and $\delta \alpha_0$ is not zero. This shows that $\iota(a) \notin (\gamma-1)X_{s,p}(E/K_{\infty})$ which, as explained above, completes the proof.
\end{proof}

By property (1) of the Kolyvagin classes in section 2.2, taking into account the fact that $E(K_{\infty})[p^{\infty}]=\{0\}$ (\cite{Matar1} corollary 2.4), we have that $\tau \, \delta \alpha_0 = \epsilon \, \delta \alpha_0$ where $-\epsilon$ is the sign of the functional equation of the L-function of $E/\Q$. let $T$ be the subgroup generated by $\delta \alpha_0$ in $H^1(K, E[p])$. With the notation following proposition \ref{res_isom_prop}, we have an extension $L_T/\Q$ which is Galois over $\Q$ since $T$ is $\tau$-invariant. Now let $H=\Gal(L_T/L) \cong E[p]$ (see \cite{Gross} prop. 9.3) and choose $h \in H$ such that $(\tau h)^2 \in H^+ - \{0\}$. We now choose an auxiliary prime $\ell_1$ such that $\ell_1$ is relatively prime to $pNd_K$ and $\Frob_{\ell_1}(L_T/\Q)=[\tau h]$ (such a prime exists by the Chebotarev density theorem).

We now claim that $\res_{\ell_1} \delta \alpha_0 \neq 0$. To prove this we only have to note that $\ell_1$ is inert in $K/\Q$ and hence $\Frob_{\ell_1}(L_T/K)=[(\tau h)^2]$. Since $(\tau h)^2$ is nonzero, the fact that $\res_{\ell_1} \delta \alpha_0 \neq 0$ follows easily from the non-degeneracy of the pairing (\ref{pairing}).

By property (3) of the Kolyvagin classes in section 2.2, it follows that $\res_{\ell_1} d_0(\ell_1) \neq 0$. This implies that $\res_{\ell_1} c_0(\ell_1) \neq 0$ which in turn implies that $c_0(\ell_1) \neq 0$. By property (1) of the Kolyvagin classes in section 2.2, we have $\tau c_0(\ell_1)= -\epsilon \, c_0(\ell_1)$.

Similarly to theorem \ref{Heegner_module_rank_theorem}, we have the following

\begin{theorem}\label{Kolyvagin_classes_rank_theorem}
$\ilim R_n c_n(\ell_1)$ is a free $\overbar{\Lambda}$-module of rank 1.
\end{theorem}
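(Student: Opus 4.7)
The plan is to mirror the proof of Theorem \ref{Heegner_module_rank_theorem}, with two substitutions: the ambient free $\overbar{\Lambda}$-module is now $\ilim H^1(K_{n,\ell_1}, E)[p]$, which is free of rank $2$ by Proposition \ref{Iwasawa_rank_proposition} (in place of $X_{s,p}(E/K_\infty)$, free of rank $1$), and the nonvanishing input at level $0$ is $\res_{\ell_1} d_0(\ell_1) \neq 0$, established in the paragraph immediately preceding the theorem (in place of $\delta \alpha_0 \neq 0$).

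Set $M := \ilim R_n c_n(\ell_1)$. First I will show that $M$ is cyclic over $\overbar{\Lambda}$. The norm relations for $\alpha_n(\ell_1)$ (the analogs of (\ref{HP1})--(\ref{HP2})), pushed through the commutative diagram (\ref{kolyvagin_classes_diagram}) exactly as in the induction at the end of \S2.2, show that $\cor_{n+1}(c_{n+1}(\ell_1)) = v_n \cdot c_n(\ell_1)$ for some unit $v_n \in R_n$. By inductively choosing unit lifts I will construct a compatible system $\tilde{c} = (\tilde{c}_n)_n \in M$ with $\tilde{c}_0 = c_0(\ell_1)$ and each $\tilde{c}_n = w_n\, c_n(\ell_1)$ for a unit $w_n \in R_n$. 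For arbitrary $(m_n) \in M$, writing $m_n = r_n c_n(\ell_1)$, the scalars $\lambda_n := r_n w_n^{-1} \in R_n$ can be verified (via the identities $\bar{r}_{n+1} v_n = r_n$ and $\bar{w}_{n+1} v_n = w_n$) to be compatible under the quotients $R_{n+1} \twoheadrightarrow R_n$, assembling into an element $\lambda \in \overbar{\Lambda}$ with $\lambda \cdot \tilde{c} = (m_n)$.

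Next, the natural maps $c \mapsto d \mapsto \res_{\ell_1}(d)$ induce $R_n$-linear surjections $R_n c_n(\ell_1) \twoheadrightarrow R_n \cdot \res_{\ell_1} d_n(\ell_1)$ that commute with corestriction (by local-global compatibility of $\cor$, using that $\ell_1$ splits completely in $K_\infty/K$). Passing to inverse limits of finite modules preserves surjectivity, yielding a $\overbar{\Lambda}$-linear surjection $M \twoheadrightarrow N$, where $N := \ilim R_n \cdot \res_{\ell_1} d_n(\ell_1)$ sits as a $\overbar{\Lambda}$-submodule of $\ilim H^1(K_{n,\ell_1}, E)[p]$. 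The latter is free of rank $2$ by Proposition \ref{Iwasawa_rank_proposition}, so $N$ is torsion-free; and the projection of $N$ to its zeroth component recovers $\Fp \cdot \res_{\ell_1} d_0(\ell_1) \neq 0$, so $N$ is nonzero.

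To finish, $N$ is cyclic (as a quotient of the cyclic $M$), and nonzero and torsion-free over the discrete valuation ring $\overbar{\Lambda} = \Fp[[T]]$, which forces $N \cong \overbar{\Lambda}$. Writing $M \cong \overbar{\Lambda}/\mathcal{I}$ by cyclicity, the surjection $M \twoheadrightarrow N \cong \overbar{\Lambda}$ must send a generator of $M$ to a unit of $\overbar{\Lambda}$, and the annihilator relation then forces $\mathcal{I} = 0$, so $M \cong \overbar{\Lambda}$ as required. The subtlest step to write up carefully will be the coherence check for the $\lambda_n$'s in the cyclicity argument, but this I expect to fall out directly from the identity $\bar{w}_{n+1} v_n = w_n$ built into the construction of the compatible system.
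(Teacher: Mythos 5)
Your proposal is correct, but it takes a genuinely different route from the paper. The paper does not argue directly with $\ilim R_n c_n(\ell_1)$: it introduces the relaxed Selmer groups $\Selm_p^{T_n}(E/K_n)$ (local condition dropped at the primes above $\ell_1$), shows $\ilim \Selm_p^{T_n}(E/K_n)$ is a free $\overbar{\Lambda}$-module, and then bounds its rank by $1$ by rerunning the entire Euler-system argument of Theorem A of \cite{Matar1} with the maps $\uppsi'_{\ell}$ and a set $\mathscr{L}(U)$ of auxiliary primes avoiding $\ell_1$; the theorem then follows because $\ilim R_n c_n(\ell_1)$ is a nonzero submodule. Your argument bypasses that global machinery entirely: it uses only (a) cyclicity of $\ilim R_n c_n(\ell_1)$, coming from the surjective norm relations of property (4), and (b) the single local nonvanishing $\res_{\ell_1} d_0(\ell_1) \neq 0$ together with Proposition \ref{Iwasawa_rank_proposition}, which makes $\ilim \res_{\ell_1} R_n d_n(\ell_1)$ a nonzero cyclic torsion-free, hence free rank-one, quotient. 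The trade-off is that the paper's detour also establishes that the relaxed module $\ilim \Selm_p^{T_n}(E/K_n)$ itself is free of rank one, whereas your route proves only the stated theorem --- but that extra information is not used elsewhere, so your proof is a genuine simplification. One technical wrinkle to fix in the write-up: in the cyclicity step the coefficients $r_n$ with $m_n = r_n c_n(\ell_1)$ are only determined modulo $\mathrm{Ann}_{R_n}(c_n(\ell_1))$, so the coherence identities $\bar r_{n+1} v_n = r_n$ and $\bar w_{n+1} v_n = w_n$ hold only modulo these annihilators; one then needs completeness of $\overbar{\Lambda}$ to lift the compatible system $(\lambda_n)$ to a genuine $\lambda \in \overbar{\Lambda}$. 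A cleaner alternative is the standard compactness argument: the sets of generators of the finite cyclic modules $R_n c_n(\ell_1)$ form an inverse system of nonempty finite sets (surjective transition maps send generators to generators), so a compatible system of generators exists, and the closed submodule it generates surjects onto every level, hence is everything.
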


\begin{proof}

If $L/\Q$ is an algebraic extension and $T$ is any set of primes of $L$ we define $\Selm_p^T(E/L)$ by the exact sequence

$$\displaystyle 0 \longrightarrow \Selm_p^T(E/L) \longrightarrow H^1(L, E[p])\longrightarrow \prod_{v \notin T} H^1(L_v, E)[p]$$

For any $n$, let $T_n$ be the primes of $K_n$ above $\ell_1$ and $T_{\infty}$ be the primes of $K_{\infty}$ above $\ell_1$. To prove the theorem, it suffices to show that the group $\ilim \Selm_p^{T_n}(E/K_n)$ (inverse limit with respect to corestriction) has $\overbar{\Lambda}$-rank less than or equal to one. To see why this suffices, note that since $E(K_{\infty})[p^{\infty}]=\{0\}$ (\cite{Matar1} corollary 2.4), $\ilim \Selm_p^{T_n}(E/K_n)$ injects into $\ilim \Selm_p^{T_{\infty}}(E/K_{\infty})^{\Gamma_n}$ (inverse limit with respect to norms). But by the argument in \cite{Matar1} proposition 3.2, $\Selm_p^{T_{\infty}}(E/K_{\infty})$ is a finitely generated $\overbar{\Lambda}$-module. This implies by \cite{Matar2} proposition 2.1 that $\ilim \Selm_p^{T_{\infty}}(E/K_{\infty})$ is a free $\overbar{\Lambda}$-module and since $\ilim \Selm_p^{T_n}(E/K_n)$ injects into it, it is also a free $\overbar{\Lambda}$-module (recall that $\overbar{\Lambda}$ is a PID).

Note that by property (2) of the Kolyvagin classes in section 2.2, $R_n c_n(\ell_1)$ is contained in $\Selm_p^{T_n}(E/K_n)$ and hence $\ilim R_n c_n(\ell_1) \subseteq \ilim \Selm_p^{T_n}(E/K_n)$. Also note that $\ilim R_n c_n(\ell_1)$ is not zero because $c_0(\ell_1)$ is not zero and the corestriction maps defining the inverse limit are surjective (see property (4) of the Kolyvagin classes in section 2.2). Now assume that $\rank_{\overbar{\Lambda}}(\ilim \Selm_p^{T_n}(E/K_n)) \le 1$. Then as $\ilim \Selm_p^{T_n}(E/K_n)$ is a free $\overbar{\Lambda}$-module and $\ilim R_n c_n(\ell_1)$ is a nonzero submodule of it, we must have that $\ilim R_n c_n(\ell_1)$ is a free $\overbar{\Lambda}$-module of rank 1.

So we see that it suffices to show that $\rank_{\overbar{\Lambda}}(\ilim \Selm_p^{T_n}(E/K_n)) \le 1$. We show this by adapting the proof of theorem A in \cite{Matar1} to our setup.

Let $\ell$ be a Kolyvagin prime different from $\ell_1$. In a similar way to \cite{Matar1} section 2.3, we will construct a map $\uppsi_{\ell}': \dlim H^1(K_{n,\ell}, E)[p] \to (\ilim \Selm_p^{T_n}(E/K_n))^{\dual}$ as follows:
First by Tate local duality we have an isomorphism

$$\dlim H^1(K_{n,\ell}, E)[p] \cong (\ilim E(K_{n,\ell})/p)^{\dual}.$$\\
Next, for any $n$, we have a restriction map $\res_{\ell}: \Selm_p^{T_n}(E/K_n) \to E(K_{n, \ell})/p$. Taking
inverse limits gives a map

$$\res_{\ell}: \ilim \Selm_p^{T_n}(E/K_n) \to \ilim E(K_{n, \ell})/p.$$\\
Dualizing this map and using the Tate duality isomorphism we get a map

$$\uppsi_{\ell}': \dlim H^1(K_{n, \ell}, E)[p] \to (\ilim \Selm_p^{T_n}(E/K_n))^{\dual}.$$\\
The map $\uppsi_{\ell}$ in \cite{Matar1} section 2.3 is similar to $\uppsi_{\ell}'$ but with a different codomain. We will modify the definition of $\mathscr{L}(U)$ on page 416 of \cite{Matar1} to exclude the prime $\ell_1$:

$$\mathscr{L}(U)=\{\ell \; \text{rational prime}\; | \;\ell \nmid p\ell_1N\; \text{and} \; \Frob_{\ell}(L_{S_{n_0}}/\Q)=[\tau u] \; \text{for} \; u \in U \}$$\\
Taking into account the remark after theorem \ref{direct_limit_theorem}, if we work with the map $\uppsi_{\ell}'$ rather than $\uppsi_{\ell}$, we obtain results identical those in \cite{Matar1} where the group $X_p(E/K_{\infty})$ gets replaced by $\ilim \Selm_p^{T_n}(E/K_n)$ so proposition 3.7 gives that $\rank_{\overbar{\Lambda}}(\ilim \Selm_p^{T_n}(E/K_n)) \le 1$ as desired. This proves the theorem.
\end{proof}

We now define the subgroup $S \subset H^1(K, E[p])$ and the set $U$ in section 2.3. Let $s=\delta \alpha_0$ and $s'=c_0(\ell_1)$. Now define $S$ to be the subgroup of $H^1(K, E[p])$ generated by $s$ and $s'$. Since $\tau s = \epsilon s$ and $\tau s' = -\epsilon s'$, the set $S$ is $\tau$-invariant and $\dim_{\Fp}(S)=2$. Let $V=\Gal(L_S/L)$ where $L=K(E[p])$. We will denote $L_{\{\Fp s\}}$ and $L_{\{\Fp s'\}}$ by $L_s$  and $L_{s'}$ respectively. By \cite{Gross} prop. 9.3 we have

$$V=\Gal(L_s/L) \times \Gal(L_{s'}/L) =E[p] \times E[p].$$\\
Complex conjugation $\tau$ acts on $V$ by

$$\tau(x,y)\tau = (\epsilon \, \tau x, -\epsilon \, \tau y).$$\\
Let $E[p]^{\epsilon}$ denote the submodule of $E[p]$ on which $\tau$ acts as $\epsilon$. We now define a subset $U$ of $V$ as

$$U=\{(x,y) \; | \; x \in E[p]^{\epsilon}-\{0\} \ \text{and} \ y \in E[p]^{-\epsilon} - \{0\}\}.$$\\
It is clear that $U^+$ generates $V^+$.\\

Before stating the next proposition note that if $M$ is a $\overbar{\Lambda}$-module on which $\tau$ acts, then $\tau$ acts on its coinvaraints $M_{\Gamma} = M/(\gamma-1)M$ as well. To see this, we need to show that $\tau (\gamma -1)M = (\gamma^{-1}-1)M$ is contained in $(\gamma-1)M$. Under the correspondence $T=\gamma-1$, this amounts to showing that the power series $(T+1)^{-1}-1$ is divisible by $T$ which is certainly true. In particular, if $M$ is a free $\overbar{\Lambda}$-module of rank 1, then $M_{\Gamma}$ is a 1-dimensional $\Fp$-vector space on which $\tau$ acts as $\omega \in \{+1, -1\}$

\begin{proposition}\label{Iwasawa_ranks_proposition}
For any $\ell \in \mathscr{L}(U)$, the submodules $\ilim \res_{\ell} R_n \alpha_n$ and $\ilim \res_{\ell} R_n c_n(\ell_1)$ of $\ilim E(K_{n,\ell})/p$ are free $\overbar{\Lambda}$-modules of rank 1 and together they generate $\ilim E(K_{n, \ell})/p$.
\end{proposition}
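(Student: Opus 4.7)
The plan is to reduce everything to a Nakayama-type argument applied to $M := \ilim E(K_{n,\ell})/p$. I would first identify the $\overbar{\Lambda}$-structure of $M$: a version of local Tate duality analogous to (\ref{Tatelocalduality_isomorphism}) (with the roles of the two factors exchanged) gives $M \cong (\dlim H^1(K_{n,\ell}, E)[p])^{\dual}$, and by \cite{Matar1} Proposition 2.5 the right hand side is cofree of rank 2 over $\overbar{\Lambda}$. Hence $M$ is a free $\overbar{\Lambda}$-module of rank $2$.

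Next I would identify the coinvariants $M_{\Gamma}$ with $E(K_\ell)/p$. Since $\ell$ splits completely in $K_\infty/K$, every local extension $K_{n+1,\lambda'}/K_{n,\lambda}$ is trivial, so the corestriction maps in the tower act componentwise as the sum map $\Fp^p \to \Fp$ and are in particular surjective. Consequently the projection $\pi_0 : M \to E(K_\ell)/p$ is surjective. Because $\gamma - 1$ annihilates $E(K_\ell)/p$, $\pi_0$ factors through a surjection $M_\Gamma \twoheadrightarrow E(K_\ell)/p$; both sides are $2$-dimensional over $\Fp$, so this is an isomorphism.

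Under this identification, the compositions $\ilim R_n\alpha_n \xrightarrow{\res_\ell} M \xrightarrow{\pi_0} E(K_\ell)/p$ and $\ilim R_n c_n(\ell_1) \xrightarrow{\res_\ell} M \xrightarrow{\pi_0} E(K_\ell)/p$ send the distinguished generators (chosen as in the proof of Theorem \ref{Heegner_module_rank_theorem}, so that $\pi_0$ yields $\delta\alpha_0$, respectively $c_0(\ell_1)$) to $\res_\ell \delta\alpha_0$ and $\res_\ell c_0(\ell_1)$. The critical nonvanishing step is to show both are nonzero for $\ell \in \mathscr{L}(U)$. I would use the non-degenerate pairing (\ref{pairing}) with $S = \Fp \, \delta\alpha_0 + \Fp \, c_0(\ell_1)$: since $\ell$ is inert in $K/\Q$, $\Frob_\ell(L_S/K) = [(\tau u)^2]$. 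For $u = (x,y) \in U$, the formula $\tau(x,y)\tau = (\epsilon \tau x, -\epsilon \tau y)$ combined with $x \in E[p]^\epsilon$ and $y \in E[p]^{-\epsilon}$ gives $(\tau u)^2 = (2x, 2y)$, nonzero in both factors because $p \geq 5$. Non-degeneracy of the pairing then forces both $\res_\ell \delta\alpha_0$ and $\res_\ell c_0(\ell_1)$ to be nonzero, exactly as in the argument given for $\ell_1$ preceding Theorem \ref{Kolyvagin_classes_rank_theorem}.

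Each of the two submodules is therefore free of rank $1$: the defining map from a free rank $1$ $\overbar{\Lambda}$-module to $M$ is nonzero, and since $M$ is torsion-free the generator has trivial annihilator, so the map is injective. Finally, by property (1) of the Kolyvagin classes, $\tau\,\delta\alpha_0 = \epsilon\,\delta\alpha_0$ and $\tau\, c_0(\ell_1) = -\epsilon\, c_0(\ell_1)$, so the two nonzero images lie in opposite $\tau$-eigenspaces of the $2$-dimensional space $E(K_\ell)/p$ and are therefore linearly independent. They span $E(K_\ell)/p \cong M_\Gamma$, and topological Nakayama for the complete local ring $\overbar{\Lambda}$ (whose maximal ideal is $(\gamma-1)$) implies that the two submodules together generate $M$. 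The step I expect to be most delicate is the identification $M_\Gamma \cong E(K_\ell)/p$ via $\pi_0$, since one needs both the surjectivity (coming from Mittag-Leffler and the trivial local extensions) and the comparison of the two $2$-dimensional $\Fp$-vector spaces; once this is in place, everything else reduces to the Chebotarev/pairing computation and a rank count.
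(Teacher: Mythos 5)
Your proposal is correct and follows essentially the same strategy as the paper's proof: identify $\ilim E(K_{n,\ell})/p$ as free of rank $2$, project to the level-zero component, use the definition of $U$ together with the pairing (\ref{pairing}) to see that $\res_\ell\delta\alpha_0$ and $\res_\ell c_0(\ell_1)$ are nonzero, and then use the opposite $\tau$-eigenvalues $\epsilon$ and $-\epsilon$ to get linear independence in the two-dimensional coinvariants before applying Nakayama. The only point you gloss over (which the paper checks via compactness of the finite groups involved) is the identification $\res_\ell\,\ilim R_n\alpha_n=\ilim\res_\ell R_n\alpha_n$, needed to pass from the image of the inverse limit to the module named in the statement; this is routine.
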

\begin{proof}
First let us note that $\res_{\ell} \ilim R_n \alpha_n = \ilim \res_{\ell} R_n \alpha_n$. This holds since for each $n$ the restriction map induces surjections $\phi_n: R_n \alpha_n \twoheadrightarrow \res_{\ell} R_n \alpha_n$ where the groups in the maps are finite and hence compact Hausdorff. So by a well-known result, taking inverse limits of the maps $\phi_n$ induces a surjection. Similarly, we have $\res_{\ell} \ilim R_n c_n(\ell) = \ilim \res_{\ell} R_n c_n(\ell_1)$. These 2 facts will be used throughout the proof.

For the inverse limits $\ilim R_n \alpha_n$, $\ilim R_n c_n(\ell_1)$, $\ilim E(K_{n, \ell})/p$, let $\pi_0$ be the projection onto its zeroth component:

$$\pi_0: \ilim R_n \alpha_n \to \Fp \alpha_0$$
$$\pi_0: \ilim R_n c_n(\ell_1) \to \Fp c_0(\ell_1)$$
$$\pi_0: \ilim E(K_{n, \ell})/p \to E(K_{\ell})/p$$\\
Recall that $\delta \alpha_0$ and $c_0(\ell_1)$ are nonzero and satisfy $\tau \, \delta \alpha_0 = \epsilon \, \delta \alpha_0$ and $\tau c_0(\ell_1) = -\epsilon \, c_0(\ell_1)$ where $-\epsilon$ is the sign of the L-function of $E/\Q$. Also recall that the corestriction maps defining the inverse limits $\ilim R_n \alpha_n$ and $\ilim R_n c_n(\ell_1)$ are surjective. It follows that there exist $a \in \ilim R_n \alpha_n$ and $b \in \ilim R_n c_n(\ell_1)$ such that $\pi_0(a)=\delta \alpha_0$ and $\pi_0(b)=c_0(\ell_1)$.

We claim that $\tau$ acts on $(\ilim R_n \alpha_n)_{\Gamma}$ as $\epsilon$ and on $(\ilim R_n c_n(\ell_1))_{\Gamma}$ as $-\epsilon$. To see this, assume that $\tau$ acts on $(\ilim R_n \alpha_n)_{\Gamma}$ as $\omega \in \{+1, -1\}$. Then $\tau a = \omega a +a'$ where $a' \in (\gamma-1)\ilim R_n \alpha_n$. Applying $\pi_0$ to both sides of this equality, we get $\tau \delta \alpha_0 = \omega \delta \alpha_0 + \pi_0(a')$. We have that $a' = (\gamma-1)a''$ for some $a'' \in \ilim R_n \alpha_n$. Since $\pi_0(a'')$ is $\Gamma$-invariant, therefore $\pi_0(a')=0$. This implies that $\omega=\epsilon$ as desired. Similarly, using the element $b$, one shows that $\tau$ acts on $(\ilim R_n c_n(\ell_1))_{\Gamma}$ as $-\epsilon$.

Now let $\ell \in \mathscr{L}(U)$. The maps in property (3) of the Kolyvagin classes in section 2.2 induce an isomorphism of $\overbar{\Lambda}$-modules $\ilim E(K_{n,\ell})/p \cong \ilim H^1(K_{n, \ell}, E)[p]$. This implies by proposition \ref{Iwasawa_rank_proposition} that $\ilim E(K_{n, \ell})/p$ is a free $\overbar{\Lambda}$-module of rank 2.

Our definition of the set $U$ ensures that $\res_{\ell}( \delta \alpha_0) \neq 0$ and $\res_{\ell}(c_0(\ell_1))\neq 0$. Therefore, $\pi_0(\res_{\ell}(a)) \neq 0$ and $\pi_0(\res_{\ell}(b))\neq 0$. In particular, $\ilim \res_{\ell} R_n \alpha_n \neq 0$ and $\ilim \res_{\ell} R_n c_n(\ell_1) \neq 0$. According to theorems \ref{Heegner_module_rank_theorem} and \ref{Kolyvagin_classes_rank_theorem}, both $\ilim R_n \alpha_n$ and $\ilim R_n c_n(\ell_1)$ are free $\overbar{\Lambda}$-modules of rank 1. Also since both $\ilim \res_{\ell} R_n \alpha_n$ and $\ilim \res_{\ell} R_n c_n(\ell_1)$ are contained in $\ilim E(K_{n, \ell})/p$ which as we mentioned above is a free $\overbar{\Lambda}$-module, therefore $\ilim \res_{\ell} R_n \alpha_n$ and $\ilim \res_{\ell} R_n c_n(\ell_1)$ are torsion-free. The above facts imply that $\ilim \res_{\ell} R_n \alpha_n$ and $\ilim \res_{\ell} R_n c_n(\ell_1)$ are free $\overbar{\Lambda}$-modules of rank 1.

Now consider any element $c \in (\gamma-1)\ilim E(K_{n,\ell})/p$. Since $E(K_{\ell})/p$ is $\Gamma$-invariant, therefore it follows that $\pi_0(c)=0$. This shows that $\res_{\ell}(a),\res_{\ell}(b) \notin (\gamma-1)\ilim E(K_{n, \ell})/p$ which in turn implies that the maps
$$\psi_1:(\ilim R_n \alpha_n)_{\Gamma} \to (\ilim E(K_{n,\ell})/p)_{\Gamma}$$
$$\psi_2: (\ilim R_n c_n(\ell_1))_{\Gamma} \to (\ilim E(K_{n,\ell})/p)_{\Gamma}$$\\
are nonzero. Since both $\ilim R_n \alpha_n$ and $\ilim R_n c_n(\ell_1)$ are free $\overbar{\Lambda}$-modules of rank 1, therefore $\dim_{\Fp}((\ilim R_n \alpha_n)_{\Gamma})=\dim_{\Fp}((\ilim R_n c_n(\ell_1))_{\Gamma})=1$. It follows that the maps $\psi_1$ and $\psi_2$ are injective. Also $\dim_{\Fp}((\ilim E(K_{n, \ell})/p)_{\Gamma})=2$ (because $\ilim E(K_{n,\ell})/p$ is a free $\overbar{\Lambda}$-module of rank 2) and, as we mentioned above, $\tau$ acts on $(\ilim R_n \alpha_ns)_{\Gamma}$ and $(\ilim R_n c_n(\ell_1))_{\Gamma}$ as $\epsilon$ and $-\epsilon$, respectively. Therefore, the images of the maps $\psi_1$ and $\psi_2$ generate $(\ilim E(K_{n, \ell})/p)_{\Gamma}$. Equivalently
$$\ilim  E(K_{n, \ell})/p = (\gamma-1) \ilim E(K_{n, \ell})/p + \ilim \res_{\ell} R_n \alpha_n + \ilim \res_{\ell} R_n c_n(\ell_1)$$
Since $\overbar{\Lambda}$ is a DVR with maximal ideal generated by $\gamma-1$ (it is isomorphic to the power series ring $\Fp[[T]]$ mapping $\gamma-1$ to $T$), therefore by Nakayama's lemma the above equality implies that $\ilim  E(K_{n, \ell})/p= \ilim \res_{\ell} R_n \alpha_n + \ilim \res_{\ell} R_n c_n(\ell_1)$. This completes the proof of the proposition.
\end{proof}

\begin{corollary}\label{Iwasawa_ranks_corollary}
For any $\ell \in \mathscr{L}(U)$, the submodules $\ilim \res_{\ell} R_n d_n(\ell)$ and $\ilim \res_{\ell} R_n d_n(\ell \ell_1)$ of $\ilim H^1(K_{n, \ell}, E)[p]$ are free $\overbar{\Lambda}$-modules of rank 1 and together they generate $\ilim H^1(K_{n, \ell}, E)[p]$.
\end{corollary}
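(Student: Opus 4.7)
The plan is to deduce the corollary from Proposition \ref{Iwasawa_ranks_proposition} by transporting its statement across the $\overbar{\Lambda}$-module isomorphism furnished by the maps $\psi_{n,\ell}$ of property (3) in Section 2.2. Since $\ell \mid \ell$ and $\ell \mid \ell\ell_1$, property (3) applies in both relevant cases, and gives
$$\psi_{n,\ell}(\res_{\ell} d_n(\ell)) = \res_{\ell} c_n(1), \qquad \psi_{n,\ell}(\res_{\ell} d_n(\ell\ell_1)) = \res_{\ell} c_n(\ell_1),$$
where $c_n(1) = \phi(\alpha_n)$. Because $\ell$ (respectively $\ell \neq \ell_1$) does not divide $1$ (respectively $\ell_1$), property (2) implies that $\res_{\ell} c_n(1)$ and $\res_{\ell} c_n(\ell_1)$ lie in the image of the Kummer embedding $E(K_{n,\ell})/p \hookrightarrow H^1(K_{n,\ell}, E[p])$, where they are identified with $\res_{\ell} \alpha_n$ and $\res_{\ell} c_n(\ell_1)$ viewed in $E(K_{n,\ell})/p$.

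Next I would invoke the $G_n$-equivariance of $\psi_{n,\ell}$ to conclude that it is $R_n$-linear, so it carries $R_n$-submodules to $R_n$-submodules. In particular
$$\psi_{n,\ell}(\res_{\ell} R_n d_n(\ell)) = \res_{\ell} R_n \alpha_n, \qquad \psi_{n,\ell}(\res_{\ell} R_n d_n(\ell\ell_1)) = \res_{\ell} R_n c_n(\ell_1).$$
The compatibility $\psi_{n,\ell} \circ \cor_{n+1} = \cor_{n+1} \circ \psi_{n+1,\ell}$ in property (3) then allows me to pass to inverse limits along the corestriction maps, yielding a $\overbar{\Lambda}$-module isomorphism $\ilim H^1(K_{n,\ell}, E)[p] \cong \ilim E(K_{n,\ell})/p$ which restricts to isomorphisms
$$\ilim \res_{\ell} R_n d_n(\ell) \cong \ilim \res_{\ell} R_n \alpha_n, \qquad \ilim \res_{\ell} R_n d_n(\ell\ell_1) \cong \ilim \res_{\ell} R_n c_n(\ell_1).$$

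Finally, Proposition \ref{Iwasawa_ranks_proposition} asserts that $\ilim \res_{\ell} R_n \alpha_n$ and $\ilim \res_{\ell} R_n c_n(\ell_1)$ are free $\overbar{\Lambda}$-modules of rank $1$ whose sum equals $\ilim E(K_{n,\ell})/p$; transporting this statement across the above isomorphism gives exactly the assertion of the corollary. Since this is a purely formal transfer along known identifications, there is no real obstacle; the only care required is in verifying that each $\psi_{n,\ell}$ sends the appropriate restricted Kolyvagin submodule onto the stated target and that these identifications are compatible in the inverse limit, both of which are immediate consequences of properties (2) and (3) of the Kolyvagin classes stated in Section 2.2.
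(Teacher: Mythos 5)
Your proposal is correct and is exactly the paper's argument: the paper's proof is the one-line instruction ``Use the previous proposition and property (3) of the Kolyvagin classes in section 2.2,'' and your write-up simply makes explicit the transport of Proposition \ref{Iwasawa_ranks_proposition} across the corestriction-compatible, $R_n$-linear isomorphisms $\psi_{n,\ell}$, using $\psi_{n,\ell}(\res_{\ell}d_n(\ell))=\res_{\ell}c_n(1)$ and $\psi_{n,\ell}(\res_{\ell}d_n(\ell\ell_1))=\res_{\ell}c_n(\ell_1)$.
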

\begin{proof}
Use the previous proposition and property (3) of the Kolyvagin classes in section 2.2.
\end{proof}

\begin{proposition}
$\img \uppsi_{\ell_1}$ is either finite or a free $\overbar{\Lambda}$-module of rank 1.
\end{proposition}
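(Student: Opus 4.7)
My plan is to identify $\img \uppsi_{\ell_1}$ as a quotient of the free rank-$2$ $\overbar{\Lambda}$-module $\ilim H^1(K_{n,\ell_1}, E)[p]$ (Proposition~\ref{Iwasawa_rank_proposition}) by a kernel containing a distinguished primitive element, and then to read off both alternatives of the conclusion from the structure theorem over the DVR $\overbar{\Lambda} \cong \Fp[[T]]$.

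The first step is to show $\ilim \res_{\ell_1} R_n d_n(\ell_1) \subseteq \ker \uppsi_{\ell_1}$. Given $s_n \in \Sel(E/K_n)$ and $y_n \in R_n d_n(\ell_1)$, property (2) of the Kolyvagin classes combined with the $G_n$-equivariance of the $R_n$-action (so that $G_n$ permutes the primes above $\ell_1$) shows $\res_v y_n = 0$ for every prime $v \nmid \ell_1$. The global reciprocity law (Proposition~\ref{global_duality_proposition}) then collapses to $\langle \res_{\ell_1} s_n, \res_{\ell_1} y_n \rangle_{\ell_1} = 0$. Passing to the limit using the surjectivity of $\cor_{n+1}: R_{n+1} d_{n+1}(\ell_1) \to R_n d_n(\ell_1)$ (property (4)) and the compatibility of $\res_{\ell_1}$ with corestriction yields the desired inclusion.

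Next I will produce a primitive element of $\overbar{\Lambda}^2$ lying inside $\ilim \res_{\ell_1} R_n d_n(\ell_1)$. On the one hand, $\ilim R_n d_n(\ell_1)$ is a quotient of $\ilim R_n c_n(\ell_1)$ via the map $H^1(K_n, E[p]) \to H^1(K_n, E)[p]$, and the latter is free of rank~$1$ by Theorem~\ref{Kolyvagin_classes_rank_theorem}; so $\ilim \res_{\ell_1} R_n d_n(\ell_1)$ has $\overbar{\Lambda}$-rank at most $1$. On the other hand, the surjective corestrictions let me lift $d_0(\ell_1)$ to some $\tilde b \in \ilim R_n d_n(\ell_1)$, and the zeroth component of $b := \res_{\ell_1}\tilde b$ is $\res_{\ell_1} d_0(\ell_1)$, which is nonzero by the discussion preceding Theorem~\ref{Kolyvagin_classes_rank_theorem}. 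Because $\ell_1$ is a Kolyvagin prime it splits completely in $K_\infty/K$, so $\ilim H^1(K_{n,\ell_1}, E)[p] \cong H^1(K_{\ell_1}, E)[p] \otimes_{\Fp} \overbar{\Lambda}$ and the zeroth-component projection is precisely the augmentation $\overbar{\Lambda}^2 \twoheadrightarrow \overbar{\Lambda}^2/T\overbar{\Lambda}^2$. Hence $b$ is primitive, and $\ilim \res_{\ell_1} R_n d_n(\ell_1) = \overbar{\Lambda} b$.

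The conclusion is then immediate. Since $\ker \uppsi_{\ell_1}$ is a submodule of the free module $\overbar{\Lambda}^2$, it is itself free of some rank $r \in \{0,1,2\}$, and it contains $\overbar{\Lambda} b \neq 0$ so $r \geq 1$. If $r = 2$, then $\img \uppsi_{\ell_1}$ is finitely generated and $\overbar{\Lambda}$-torsion, hence finite (every $\overbar{\Lambda}/T^a$ is finite). If $r = 1$, then $\ker \uppsi_{\ell_1}$ is cyclic and contains the primitive element $b$, which forces $\ker \uppsi_{\ell_1} = \overbar{\Lambda} b$; extending $b$ to a basis of $\overbar{\Lambda}^2$ then identifies $\img \uppsi_{\ell_1} \cong \overbar{\Lambda}$, free of rank~$1$. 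The main subtlety is the primitivity step, which requires the identification of the zeroth-component projection with the augmentation map; this rests on Proposition~\ref{Iwasawa_rank_proposition} together with the complete splitting of $\ell_1$ in $K_\infty/K$.
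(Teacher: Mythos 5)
Your proof is correct and follows essentially the same route as the paper: both show via global reciprocity and property (2) that $\ilim \res_{\ell_1} R_n d_n(\ell_1)$ lies in $\ker \uppsi_{\ell_1}$, identify it as a rank-one free submodule of the free rank-two module $\ilim H^1(K_{n,\ell_1},E)[p]$ generated by a primitive element (equivalently, not contained in $(\gamma-1)$ times the ambient module, via $\res_{\ell_1} d_0(\ell_1)\neq 0$), and conclude from the structure theory over $\overbar{\Lambda}\cong\Fp[[T]]$. The only cosmetic difference is that the paper transports the structure from $\ilim\res_{\ell_1}R_n\alpha_n$ via the isomorphism $\psi_{n,\ell_1}$ and phrases the last step as a quotient of a free rank-one module, whereas you analyse the rank of the kernel directly.
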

\begin{proof}
Note that $\ell_1$ was chosen so that $\res_{\ell_1}(\delta \alpha) \neq 0$. Therefore, just as in the proof of proposition \ref{Iwasawa_ranks_proposition} (see paragraphs 6 and 7 of the proof), $\ilim \res_{\ell_1} R_n \alpha_n$ is a free $\overbar{\Lambda}$-submodule of $\ilim E(K_{n, \ell_1})/p$ of rank 1 that is not contained in $(\gamma-1) \ilim E(K_{n, \ell_1})/p$. So using property (3) of the Kolyvagin classes in section 2.2, we get that $\ilim \res_{\ell_1} R_n d_n(\ell_1)$ is a free $\overbar{\Lambda}$-submodule of $\ilim H^1(K_{n, \ell_1}, E)[p]$ of rank 1 that is not contained in $(\gamma-1) \ilim H^1(K_{n, \ell_1}, E)[p]$.

By proposition \ref{global_duality_proposition} and property (2) of the Kolyvagin classes in section 2.2, $\uppsi_{\ell_1}(\ilim \res_{\ell_1} R_n d_n(\ell_1))=0$ so $\img \uppsi_{\ell_1}$ is the homomorphic image of $\ilim H^1(K_{n, \ell_1}, E)[p]/ \ilim \res_{\ell_1} R_n d_n(\ell_1)$. Since by proposition \ref{Iwasawa_rank_proposition}, $\ilim H^1(K_{n, \ell_1}, E)[p]$ is a free $\overbar{\Lambda}$-module of rank 2 and since $\ilim \res_{\ell_1} R_n d_n(\ell_1)$ is a free $\overbar{\Lambda}$-submodule of rank 1 that is not contained in $(\gamma-1)\ilim H^1(K_{n, \ell_1}, E)[p]$ therefore the quotient $\ilim H^1(K_{n, \ell_1}, E)[p] / \ilim \res_{\ell_1} R_n d_n(\ell_1)$ is a free $\overbar{\Lambda}$-module of rank 1. The result follows.
\end{proof}

The following is the essential step to proving our desired theorem

\begin{theorem}\label{Selmer_structure_theorem}
$\Sel(E/K_{\infty})^{\dual}$ is a free $\overbar{\Lambda}$-module of rank 1.
\end{theorem}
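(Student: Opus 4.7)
The plan is to prove that $\Sel(E/K_{\infty})^{\dual}$ already coincides with the single image $\img \uppsi_{\ell_1}$, and then to invoke the proposition immediately preceding this theorem. Since $\img \uppsi_{\ell_1}$ is by that proposition either finite or a free $\overbar{\Lambda}$-module of rank $1$, and since $\Sel(E/K_{\infty})^{\dual}$ has $\overbar{\Lambda}$-rank $1$ by Theorem 3.4 of \cite{Matar2} (applicable here thanks to the remark following Theorem \ref{direct_limit_theorem}), the finite case is excluded and freeness follows.

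The main step, showing $\Sel(E/K_{\infty})^{\dual} \subseteq \img \uppsi_{\ell_1}$, proceeds by global duality (Proposition \ref{global_duality_proposition}). Fix any $\ell \in \mathscr{L}(U)$. First, by property (2) of the Kolyvagin classes in Section 2.2, $d_n(\ell)$ is unramified outside $\ell$, so the global reciprocity sum collapses to
$$\langle \res_\ell s, \res_\ell d_n(\ell) \rangle_\ell = 0 \quad \text{for all } s \in \Sel(E/K_n),$$
and after passing to limits this shows that $\uppsi_\ell$ annihilates $\ilim \res_\ell R_n d_n(\ell)$. Second, since $d_n(\ell\ell_1)$ is unramified outside $\{\ell,\ell_1\}$, global reciprocity yields
$$\langle \res_\ell s, \res_\ell d_n(\ell\ell_1) \rangle_\ell = -\langle \res_{\ell_1} s, \res_{\ell_1} d_n(\ell\ell_1) \rangle_{\ell_1},$$
which, read as an identity of functionals on $\Sel(E/K_{\infty})$, becomes $\uppsi_\ell(\res_\ell d_n(\ell\ell_1)) = -\uppsi_{\ell_1}(\res_{\ell_1} d_n(\ell\ell_1))$. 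Hence $\uppsi_\ell(\ilim \res_\ell R_n d_n(\ell\ell_1))$ is contained in $\img \uppsi_{\ell_1}$.

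Combining these two observations with Corollary \ref{Iwasawa_ranks_corollary}, which writes
$$\ilim H^1(K_{n,\ell}, E)[p] = \ilim \res_\ell R_n d_n(\ell) + \ilim \res_\ell R_n d_n(\ell\ell_1),$$
I obtain $\img \uppsi_\ell \subseteq \img \uppsi_{\ell_1}$ for every $\ell \in \mathscr{L}(U)$. Because $U$ was constructed so that $U^+$ generates $V^+$, Proposition \ref{generating_Selmer_proposition} then gives
$$\Sel(E/K_{\infty})^{\dual} = \sum_{\ell \in \mathscr{L}(U)} \img \uppsi_\ell \subseteq \img \uppsi_{\ell_1}.$$
The reverse inclusion is automatic, so $\Sel(E/K_{\infty})^{\dual} = \img \uppsi_{\ell_1}$ and the proof concludes as indicated above.

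The main technical obstacle I anticipate is propagating the finite-level global duality identities up to the inverse limits. One has to use the adjunction $\langle \res a, b\rangle = \langle a, \cor b\rangle$ of Tate local duality, combined with the surjectivity of the corestriction maps defining $\ilim R_n d_n(\ell)$ and $\ilim R_n d_n(\ell\ell_1)$ (property (4) of Section 2.2), to ensure that the annihilation relation and the sign-reversal relation actually hold on the full inverse limits and not merely on some dense subset. Once this compatibility is in place, the rest of the argument is purely formal.
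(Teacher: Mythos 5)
Your proposal is correct and follows essentially the same route as the paper's own proof: global reciprocity plus property (2) to show $\uppsi_{\ell}$ kills $\ilim\res_{\ell}R_nd_n(\ell)$ and sends $\ilim\res_{\ell}R_nd_n(\ell\ell_1)$ into $\img\uppsi_{\ell_1}$, then Corollary \ref{Iwasawa_ranks_corollary} and Proposition \ref{generating_Selmer_proposition} to get $\Sel(E/K_{\infty})^{\dual}=\img\uppsi_{\ell_1}$, and finally the rank-one statement from \cite{Matar2} to rule out the finite case. The only cosmetic difference is your phrasing ``unramified outside $\ell$'' where property (2) actually gives vanishing of the local components away from the support, which is what the collapse of the reciprocity sum really uses.
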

\begin{proof}
Let $\ell \in \mathscr{L}(U)$. Using proposition \ref{global_duality_proposition} together with property (2) of the Kolyvagin classes in section 2.2, we see that $\uppsi_{\ell}(\ilim \res_{\ell} R_n d_n(\ell))=0$. Therefore from corollary \ref{Iwasawa_ranks_corollary}, $\img \uppsi_{\ell} = \uppsi_{\ell}(\ilim \res_{\ell} R_n d_n(\ell \ell_1))$.

Now let $\alpha \in \ilim R_n d_n(\ell \ell_1)$. Then using proposition \ref{global_duality_proposition} together with property (2) of the Kolyvagin classes in section 2.2 it follows that

$$\uppsi_{\ell}(\res_{\ell}(\alpha)) + \uppsi_{\ell_1}(\res_{\ell_1}(\alpha)) = 0$$\\
Therefore, $\uppsi_{\ell}(\res_{\ell}(\alpha)) \in \img \uppsi_{\ell_1}$ and so by the above observation $\img \uppsi_{\ell} \subseteq \img \uppsi_{\ell_1}$ (note that $\res_{\ell} \ilim R_n d_n(\ell) = \ilim \res_{\ell} R_n d_n(\ell)$ by the argument in the beginning of the proof of proposition \ref{Iwasawa_ranks_proposition}).

Let $X$ be the submodule of $\Sel(E/K_{\infty})^{\dual}$ generated by $\img \uppsi_{\ell}$ as $\ell$ ranges over $\mathscr{L}(U)$. Then by our observation above, $X \subseteq \img \uppsi_{\ell_1}$. Also by proposition \ref{generating_Selmer_proposition}, $X=\Sel(E/K_{\infty})^{\dual}$. Since also $\uppsi_{\ell_1}$ is contained in $\Sel(E/K_{\infty})^{\dual}$, it therefore follows that $\img \uppsi_{\ell_1} = \Sel(E/K_{\infty})^{\dual}$.

Then by the previous proposition, it follows that $\Sel(E/K_{\infty})^{\dual}$ is either finite or a free $\overbar{\Lambda}$-module of rank 1. By \cite{Matar2} theorem 3.4 (and the remark after theorem \ref{direct_limit_theorem}), $\Selinf(E/K_{\infty})^{\dual}$ has $\Lambda$-rank equal to 1 and $\mu$-invariant equal to zero. Also according to corollary 2.4 in \cite{Matar1}, $E(K_{\infty})[p^{\infty}]=\{0\}$ which implies that we have an isomorphism $\Sel(E/K_{\infty}) \isomarrow \Selinf(E/K_{\infty})[p]$. These 2 facts imply that $\Sel(E/K_{\infty})^{\dual}$ has $\overbar{\Lambda}$-rank is equal to 1 so $\Sel(E/K_{\infty})^{\dual}$ cannot be finite. It is therefore a free $\overbar{\Lambda}$-module of rank 1.
\end{proof}

We need one more observation before proving our theorem

\begin{proposition}
$\rank (E(K_n)) \geq p^n$ for all $n \geq 0$
\end{proposition}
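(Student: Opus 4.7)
The plan is to produce, inside $E(K_n)/p$, an $\Fp$-subspace of dimension $p^n$ consisting of $R_n$-multiples of the Heegner point $\alpha_n$, and then use $E(K_n)[p^{\infty}]=\{0\}$ to convert the lower bound on $\dim_{\Fp}(E(K_n)/p)$ into a lower bound on $\rank(E(K_n))$. Concretely, I will show that the annihilator of $\alpha_n$ in $R_n$ is zero, so that $R_n\alpha_n\cong R_n$ as an $R_n$-module and therefore has $\Fp$-dimension equal to $|G_n|=p^n$.

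For the key step, identify $R_n=\Fp[\Gamma/\Gamma_n]$ with $\Fp[T]/T^{p^n}$ via $\gamma-1\mapsto T$. This is a local principal ideal ring, so the annihilator of the Kummer image of $\alpha_n$ is $(T^k)$ for some $0\le k\le p^n$, and it suffices to check that $T^{p^n-1}\alpha_n\ne 0$. Up to a unit in $\Fp$, $T^{p^n-1}$ is the norm element $\sum_{\sigma\in G_n}\sigma$ of $R_n$, so $T^{p^n-1}\alpha_n$ equals (a unit times) $\mathrm{Tr}_{K_n/K}(\alpha_n)$. Iterating the identity $\mathrm{Tr}_{K_m/K_{m-1}}(\alpha_m)=u_m^{-1}\alpha_{m-1}$ with $u_m\in R_{m-1}^{\times}$ that was proved in Section 2.2 (this is precisely the content of the inductive argument showing surjectivity of $\cor_{m}:R_m\alpha_m\to R_{m-1}\alpha_{m-1}$), and using that the surjection $R_m\twoheadrightarrow R_{m-1}$ sends units to units, one gets $\mathrm{Tr}_{K_n/K}(\alpha_n)=v\,\alpha_0$ for some $v\in\Fp^{\times}$. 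Now $\alpha_0=a\,y_K$ with $a\in\{a_p,\,a_p-2\}$ a $p$-adic unit by $(\star)$-iv,v,vi, and $p\nmid y_K$ by hypothesis, so $\delta\alpha_0\ne 0$ in $E(K)/p$. Hence $T^{p^n-1}\alpha_n\ne 0$, forcing $k=p^n$ and $R_n\alpha_n\cong R_n$.

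To conclude, recall that the Kummer sequence yields an injection $E(K_n)/p\hookrightarrow H^1(K_n,E[p])$, so the preimage of $R_n\alpha_n$ inside $E(K_n)/p$ is an $\Fp$-subspace of dimension $p^n$, giving $\dim_{\Fp}(E(K_n)/p)\ge p^n$. By corollary 2.4 of \cite{Matar1} we have $E(K_n)[p]\subseteq E(K_{\infty})[p^{\infty}]=\{0\}$, and for any finitely generated abelian group $A$ with $A[p]=0$ one has $\dim_{\Fp}(A/p)=\rank(A)$. Therefore $\rank(E(K_n))\ge p^n$. The only place requiring care is tracking the units $u_m$ through the iterated trace: they live in successively smaller rings $R_{n-1},R_{n-2},\dots,R_0$, but the surjectivity of $R_m\twoheadrightarrow R_{m-1}$ on unit groups makes this entirely routine, so no genuine obstacle arises.
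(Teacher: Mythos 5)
Your proof is correct, but it takes a genuinely different route from the paper's. The paper obtains $\dim_{\Fp}(R_n\alpha_n)=p^n$ from the global structure theory already developed: it combines theorem \ref{Heegner_module_rank_theorem} (the maps $\iota$ and $\Xi$ are isomorphisms) with theorem \ref{Selmer_structure_theorem} (freeness of $\Sel(E/K_{\infty})^{\dual}$, which gives both the surjectivity of the transition maps defining $Y_{s,p}(E/K_{\infty})$ and the count $\dim_{\Fp}\Sel(E/K_{\infty})^{\Gamma_n}=p^n$) in a commutative diagram showing that $\res_n\circ\iota_n:R_n\alpha_n\to\Sel(E/K_{\infty})^{\Gamma_n}$ is an isomorphism. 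You instead prove $R_n\alpha_n\cong R_n$ directly and locally: in $R_n\cong\Fp[T]/(T^{p^n})$ the annihilator of $\delta\alpha_n$ is some $(T^k)$, and $T^{p^n-1}$ is the norm element $N_{G_n}$, so $T^{p^n-1}\delta\alpha_n=\res\bigl(\delta(\mathrm{Tr}_{K_n/K}\alpha_n)\bigr)$, which the iterated norm relations of Section 2.2 identify with a nonzero scalar multiple of $\res(\delta\alpha_0)\neq 0$ (using the injectivity of restriction). This argument is elementary, uses only the Heegner norm relations together with $\delta\alpha_0\neq 0$, and is entirely independent of theorems \ref{Heegner_module_rank_theorem} and \ref{Selmer_structure_theorem}; it could be placed immediately after Section 2.2. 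What the paper's longer route buys is the strictly stronger conclusion that the Heegner module $R_n\alpha_n$ is \emph{all} of $\Sel(E/K_{\infty})^{\Gamma_n}$ at every finite level, not merely that it has the right size; for the stated inequality your argument suffices, and the final conversion $\dim_{\Fp}(E(K_n)/p)=\rank(E(K_n))$ via $E(K_{\infty})[p^{\infty}]=\{0\}$ coincides with the paper's. One small point of hygiene: when iterating $\mathrm{Tr}_{K_m/K_{m-1}}(\alpha_m)=u_m^{-1}\alpha_{m-1}$, the clean way to dispose of the unit $u_m^{-1}\in R_{m-1}^{\times}$ is the identity $N_{G_{m-1}}\cdot r=\epsilon(r)N_{G_{m-1}}$ with $\epsilon$ the augmentation $R_{m-1}\to\Fp$ (so each unit contributes only its nonzero augmentation); the surjectivity of $R_m\to R_{m-1}$ on unit groups is not the relevant fact, though this does not affect the validity of your argument.
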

\begin{proof}
Consider the group $Y_{s,p}(E/K_{\infty})=\ilim \Sel(E/K_{\infty})^{\Gamma_n}$ where the inverse limit is with respect to norm maps. We claim that the transition norm maps are surjective. Let $X=\Sel(E/K_{\infty})^{\dual}$. By duality, to show that the transition norm maps defining $Y_{s,p}(E/K_{\infty})$ are surjective, it would suffice to show that for any $n$, the map $\theta_n : X_{\Gamma_n} \to X_{\Gamma_{n+1}}$ induced by $\text{Tr}_{K_{n+1}/K_n}$ is injective. Note that $X_{\Gamma_n}=X/(\gamma^{p^n}-1)X$, $X_{\Gamma_{n+1}}=X/(\gamma^{p^{n+1}}-1)X$ and $\text{Tr}_{K_{n+1}/K_n}=\frac{\gamma^{p^{n+1}}-1}{\gamma^{p^n}-1}=\sum_{i=0}^{p-1} \gamma^{ip^n}$.

Let $x \in X$ and assume that $\text{Tr}_{K_{n+1}/K_n}x = (\gamma^{p^{n+1}}-1)X$. Then $\text{Tr}_{K_{n+1}/K_n}x=(\gamma^{p^{n+1}}-1)y$ for some $y \in X$. This implies that $\text{Tr}_{K_{n+1}/K_n}(x-(\gamma^{p^n}-1)y)=0$. But by theorem \ref{Selmer_structure_theorem}, $X$ is a free $\overbar{\Lambda}$-module and so in particular is torsion-free. Therefore we have $x=(\gamma^{p^n}-1)y$. This shows that $\theta_n$ is injective and hence the transition maps defining $Y_{s,p}(E/K_{\infty})$ are surjective.

For the groups $\ilim R_n \alpha_n$ and $Y_{s,p}(E/K_{\infty})$, we let $\pi_n$ be the projection onto its $n$-th component.

$$\pi_n: \ilim R_k \alpha_k \to R_n \alpha_n$$
$$\pi_n: Y_{s,p}(E/K_{\infty}) \to \Sel(E/K_n)$$\\
Recall from the beginning of this section, that we have maps $\Xi \circ \iota: \ilim R_n \alpha_n \to Y_{s,p}(E/K_{\infty})$ and maps $\res_n \circ \; \iota_n: R_n \alpha_n \to \Sel(E/K_{\infty})^{\Gamma_n}$. These maps give a commutative diagram

\[
\xymatrixcolsep{5pc}
\xymatrix {
\ilim R_k \alpha_k \ar@{->>}[d]^{\pi_n} \ar@{->>}[r]^{\Xi \; \circ \; \iota} & Y_{s,p}(E/K_{\infty}) \ar@{->>}[d]^{\pi_n} \\
R_n \alpha_n \ar[r]^{\res_n \circ \; \iota_n} & \Sel(E/K_{\infty})^{\Gamma_n} }
\]\\

The surjectivity of the top row follows from theorem \ref{Heegner_module_rank_theorem}. As for the surjectivity of the vertical maps, this follows from the fact that the transition maps defining both $\ilim R_n \alpha_n$ and $Y_{s,p}(E/K_{\infty})$ are surjective (For $Y_{s,p}(E/K_{\infty})$, this is shown above. For $\ilim R_n \alpha_n$, see property (4) of the Kolyvagin classes in section 2.2).

We claim that the bottom map is an isomorphism. The injectivity follows from the fact that $\iota_n$ is injective and the fact that $E(K_{\infty})[p^{\infty}]=\{0\}$ (\cite{Matar1} corollary 2.4) which implies that $\res_n$ is injective. The surjectivity follows from the commutative diagram.

The conclusion of the proof now is easy. Let $n \geq 0$ be an integer. By theorem \ref{Selmer_structure_theorem}, $\Sel(E/K_{\infty})^{\dual}$ is a free $\overbar{\Lambda}$-module of rank 1. Therefore $\dim_{\Fp}(\Sel(E/K_{\infty})^{\Gamma_n})=p^n$ and so by the isomorphism of the bottom row of the commutative diagram, we have $\dim_{\Fp}(R_n \alpha_n)=p^n$. But $R_n \alpha_n$ is contained in the image of $E(K_n)/p \hookrightarrow H^1(K_n, E[p])$ via the injective Kummer map. Therefore, $\dim_{\Fp}(E(K_n)/p) \geq p^n$. Since $E(K_{\infty})[p^{\infty}]=\{0\}$, $\dim_{\Fp}(E(K_n)/p)=\rank(E(K_n))$. So we get $\rank(E(K_n)) \geq p^n$ as desired.
\end{proof}

We can now finally prove our theorem

\begin{theorem}
Assume that $(E,p)$ satisfies $(\star)$ and that $p$ does not divide $y_K$ in $E(K)$. Then we have
\begin{enumerate}[(i)]
\item $\Selinf(E/K_{\infty})^{\dual}$ is a free $\Lambda$-module of rank 1
\item $\rank(E(K_n))=p^n$ for all $n \geq 0$
\item $\Sha(E/K_n)[p^{\infty}]=\{0\}$ for all $n \geq 0$.
\end{enumerate}
\end{theorem}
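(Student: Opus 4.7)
My plan is to derive all three parts from Theorem \ref{Selmer_structure_theorem} and the preceding proposition, combined with Mazur's control theorem (already in hand) and a Nakayama-type lifting from $\overbar{\Lambda}$ to $\Lambda$.

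For (i), I would first use $E(K_{\infty})[p^{\infty}] = \{0\}$ to get $\Sel(E/K_{\infty}) \isomarrow \Selinf(E/K_{\infty})[p]$, and Pontryagin dualize to obtain
\[
\Selinf(E/K_{\infty})^{\dual}/p\,\Selinf(E/K_{\infty})^{\dual} \;\cong\; \Sel(E/K_{\infty})^{\dual},
\]
which is free of rank $1$ over $\overbar{\Lambda}$ by Theorem \ref{Selmer_structure_theorem}. Setting $M = \Selinf(E/K_{\infty})^{\dual}$, this forces $M/(p,T)M \cong \Fp$. Since $M$ is a finitely generated compact $\Lambda$-module and $(p,T)$ is the maximal ideal of the regular local ring $\Lambda = \Zp[[T]]$, Nakayama's lemma gives that $M$ is cyclic, i.e.\ there is a surjection $\Lambda \twoheadrightarrow M$ with kernel some ideal $I$. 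By \cite{Matar2} Theorem 3.4 (together with the remark after Theorem \ref{direct_limit_theorem}), $M$ has $\Lambda$-rank $1$; since $\Lambda$ is a domain and $\Lambda/I$ has rank $1$, we must have $I = 0$, proving $M \cong \Lambda$.

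Next, I would use (i) to pin down the $\Zp$-corank of $\Selinf(E/K_n)$. Mazur's control theorem (as applied in Theorem \ref{Heegner_module_rank_theorem}) yields $\Selinf(E/K_n) \cong \Selinf(E/K_{\infty})^{\Gamma_n}$, whose Pontryagin dual is $\Lambda/(\gamma^{p^n}-1)\Lambda$, a free $\Zp$-module of rank $p^n$; hence $\Selinf(E/K_n) \cong (\Qp/\Zp)^{p^n}$. Plugging this into the Kummer exact sequence
\[
0 \longrightarrow E(K_n) \otimes \Qp/\Zp \longrightarrow \Selinf(E/K_n) \longrightarrow \Sha(E/K_n)[p^{\infty}] \longrightarrow 0
\]
and invoking corank additivity, the lower bound $\rank E(K_n) \geq p^n$ from the previous proposition forces $\rank E(K_n) = p^n$ and $\Sha(E/K_n)[p^{\infty}]$ to be finite, giving (ii).

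Finally, for (iii) I would Pontryagin dualize the Kummer sequence, using (ii) to identify $(E(K_n) \otimes \Qp/\Zp)^{\dual} \cong \Zp^{p^n}$ and $\Selinf(E/K_n)^{\dual} \cong \Zp^{p^n}$, yielding
\[
0 \longrightarrow (\Sha(E/K_n)[p^{\infty}])^{\dual} \longrightarrow \Zp^{p^n} \longrightarrow \Zp^{p^n} \longrightarrow 0.
\]
A surjection between free $\Zp$-modules of equal finite rank is automatically an isomorphism (tensoring with $\Qp$ produces a surjection of equidimensional $\Qp$-vector spaces, so the kernel is $\Zp$-torsion while the source is torsion-free), so the kernel vanishes and $\Sha(E/K_n)[p^{\infty}] = 0$. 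The main obstacle I anticipate is part (i): although freeness modulo $p$ comes straight from Theorem \ref{Selmer_structure_theorem}, lifting it to freeness over $\Lambda$ genuinely needs both the Nakayama step over the two-dimensional local ring $\Lambda$ and the $\Lambda$-rank/$\mu$-invariant input imported from \cite{Matar2}; once (i) is secured, (ii) and (iii) reduce to standard corank bookkeeping.
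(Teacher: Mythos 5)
Your proposal is correct, and all the inputs you invoke (Theorem \ref{Selmer_structure_theorem}, the preceding proposition giving $\rank E(K_n) \geq p^n$, the isomorphism form of Mazur's control theorem established in the proof of Theorem \ref{Heegner_module_rank_theorem}, the vanishing of $E(K_{\infty})[p^{\infty}]$, and the $\Lambda$-rank-one statement from \cite{Matar2}) are indeed available at this point in the paper. The organization, however, is genuinely different from the paper's. The paper proves (ii) and (iii) \emph{first}, working entirely mod $p$: freeness of $\Sel(E/K_{\infty})^{\dual}$ over $\overbar{\Lambda}$ gives $\dim_{\Fp}\Sel(E/K_{\infty})^{\Gamma_n}=p^n$, the injection $\Sel(E/K_n)\hookrightarrow \Sel(E/K_{\infty})^{\Gamma_n}$ gives the upper bound $\dim_{\Fp}\Sel(E/K_n)\le p^n$, and squeezing against the lower bound $\rank E(K_n)\ge p^n$ yields $\Sha(E/K_n)[p]=0$, hence $\Sha(E/K_n)[p^{\infty}]=0$ directly. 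It then proves (i) by isolating the torsion submodule $D$ of $X=\Selinf(E/K_{\infty})^{\dual}$, using the snake lemma and $\mu=0$ to show $D/p$ is finite and embeds in the torsion-free module $X/p$, whence $D=0$, and concluding freeness from cyclicity plus torsion-freeness. Your route inverts this: you establish (i) first by the cleaner observation that $X/(p,T)X\cong\Fp$ forces $X\cong\Lambda/I$ and $\Lambda$-rank one forces $I=0$ (this neatly bypasses the snake-lemma/torsion-submodule step and does not explicitly need $\mu=0$, only the rank), and you then deduce (ii) and (iii) from (i) by corank bookkeeping in the $p^{\infty}$-Kummer sequence and by dualizing to a surjection $\Zp^{p^n}\twoheadrightarrow\Zp^{p^n}$. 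What the paper's order buys is that (ii) and (iii) are seen to depend only on the mod-$p$ statement of Theorem \ref{Selmer_structure_theorem}; what your order buys is a shorter proof of (i) and a uniform derivation of (ii)--(iii) from the $\Lambda$-module structure. Both are complete.
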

\begin{proof}
First we prove (ii) and (iii). Let $n \geq 0$ be an integer. According to theorem \ref{Selmer_structure_theorem}, $\Sel(E/K_{\infty})^{\dual}$ is a free $\overbar{\Lambda}$-module of rank 1. Therefore $\dim_{\Fp}(\Sel(E/K_{\infty})^{\Gamma_n})=p^n$. Since by \cite{Matar1} corollary 2.4 $E(K_{\infty})[p^{\infty}]=\{0\}$, therefore it follows that $\Sel(E/K_n)$ injects via the restriction map into $\Sel(E/K_{\infty})^{\Gamma_n}$. So $\dim_{\Fp}(\Sel(E/K_n)) \le p^n$. Also, $E(K_{\infty})[p^{\infty}]=\{0\}$ implies that $\dim_{\Fp}(E(K_n))/p=\rank(E(K_n))$. Combining this with the result of the previous proposition, we get $\dim_{\Fp}(E(K_n)/p)=\rank(E(K_n)) \geq p^n$. But $E(K_n)/p$ injects via the Kummer map into $\Sel(E/K_n)$ and so by all the facts just mentioned, we must have
\begin{equation}\label{rank_equality}
\rank(E(K_n))=\dim_{\Fp}(E(K_n)/p)=\dim_{\Fp}(\Sel(E/K_n))=p^n
\end{equation}
Then (\ref{rank_equality}) proves (ii) and combining it with the following exact sequence gives (iii).

$$0 \longrightarrow E(K_n)/p \longrightarrow \Sel(E/K_n) \longrightarrow \Sha(E/K_n)[p] \longrightarrow 0$$\\
We now prove (i). Let $X=\Selinf(E/K_{\infty})^{\dual}$ and $D$ the torsion $\Lambda$-submodule of $X$. We have an exact sequence

$$0 \longrightarrow D \longrightarrow X \longrightarrow X/D \longrightarrow 0$$\\
The snake lemma with the multiplication by $p$ map gives the following exact sequence

$$(X/D)[p] \longrightarrow D/p \longrightarrow X/p$$\\
But $X/D$ is torsion-free and so $(X/D)[p]=\{0\}$. Therefore, $D/p$ injects into $X/p$. We claim that $D/p$ is finite.  To see this, note that from \cite{NSW} propositions 5.1.7, 5.1.8 and 5.1.9, we have that $D/p$ is infinite if and only if $X$ has positive $\mu$-invariant. According to \cite{Matar2} theorem 3.4 (and the remark after theorem \ref{direct_limit_theorem}), $X$ has $\mu$-invariant equal to zero and so it follows that $D/p$ is finite.

Now since $E(K_{\infty})[p^{\infty}]=\{0\}$ (\cite{Matar1} corollary 2.4), therefore we have an isomorphism $\Sel(E/K_{\infty}) \isomarrow \Selinf(E/K_{\infty})[p]$. So $X/p=\Sel(E/K_{\infty})^{\dual}$ and this is a free $\overbar{\Lambda}$-module of rank 1 by theorem \ref{Selmer_structure_theorem}. Since $X/p$ is a torsion-free $\overbar{\Lambda}$-module, it follows that $D/p=0$ which by Nakayama's lemma implies that $D=0$ i.e. $X$ is torsion-free. Moreover, as $X/p$ is a free $\overbar{\Lambda}$-module of rank 1, it follows that $\dim_{\Fp}(X/(\gamma-1, p)X)=1$ so by Nakayama's lemma $X$ is a cyclic $\Lambda$-module. Since it is also torsion-free, it must be free.
\end{proof}

\end{document}